\newtheorem{proposition}[subsection]{Proposition}
\newtheorem{theorem}[subsection]{Theorem}
\newtheorem{corollary}[subsection]{Corollary}
\newtheorem{lemma}[subsection]{Lemma}
\newtheorem{definition}[subsection]{Definition}
\theoremstyle{definition}
\tikzset{
    dot/.style={circle,draw,fill,inner sep=1.25pt},
    intdot/.style={circle,draw,fill=gray,inner sep=2pt},
    opendot/.style={circle,draw,inner sep=1pt},
    onearrow/.style={postaction={decorate}, decoration={markings,mark=at position .6 with {\arrow[draw,line width=1pt]{>}}}},
    inversearrow/.style={postaction={decorate}, decoration={markings,mark=at position .45 with {\arrow[draw,line width=1pt]{<}}}},
    twoarrows/.style={draw, postaction={decorate}, decoration={markings,mark=at position .35 with {\arrow[draw,line width=1pt]{>}},mark=at position .75 with {\arrow[draw,line width=1pt]{>}}}},
    twoarrowsempty/.style={postaction={decorate}, decoration={markings,mark=at position .3 with {\arrow[draw,line width=1pt]{>}},mark=at position .7 with {\arrow[draw,line width=1pt]{>}}}},
    inversetwoarrows/.style={draw, postaction={decorate}, decoration={markings,mark=at position .35 with {\arrow[draw,line width=1pt]{<}},mark=at position .7 with {\arrow[draw,line width=1pt]{<}}}},
    squiggly/.style={draw, decorate,decoration={snake,amplitude=.3mm,segment length=2mm}},
    fastsquiggly/.style={draw, decorate,decoration={snake,amplitude=.3mm,segment length=1mm}},
    inversesquiggly/.style={draw, decorate,decoration={snake,amplitude=.2mm,segment length=2mm},postaction={decorate,decoration={markings,mark=at position .45 with {\arrow[draw,line width=1pt]{<}}}}},
    degreeshift/.style={onearrow,dashed},
    octarine/.style={postaction={decorate,decoration={markings,mark=at position .6 with {\arrow[draw,line width=1pt]{>}}}}},
    tensor/.style={draw,double,double distance=1.5pt},
}
\newcommand\mult{\,\tikz[baseline=(basepoint)]{ 
    \path (0,0) coordinate (basepoint) (0,4pt) node[dot] {};
    \draw[](-6pt,-4pt) -- (0,4pt);
    \draw[](6pt,-4pt) -- (0,4pt);
    \draw[](0,4pt) -- (0,12pt);
  }\,}
\newcommand\comult{\,\tikz[baseline=(basepoint)]{ 
    \path (0,0) coordinate (basepoint) (0,4pt) node[dot] {};
    \draw[](0,-4pt) -- (0,4pt);
    \draw[](0,4pt) -- (-6pt,12pt);
    \draw[](0,4pt) -- (6pt,12pt);
  }\,}
\newcommand\define[1]{{\em #1}}
\newcommand\cat[1]{\textmd{\textsc{#1}}}
\newcommand\Ff{{\mathcal F}}
\newcommand\NN{{\mathbb N}}
\newcommand\QQ{{\mathbb Q}}
\newcommand\RR{{\mathbb R}}
\renewcommand\SS{{\mathbb S}}
\newcommand\ZZ{\mathbb Z}
\newcommand\B{{\mathbf B}}
\newcommand\C{{\mathrm C}}
\DeclareMathOperator{\End}{End}
\DeclareMathOperator{\DGVect}{\cat{DGVect}}
\DeclareMathOperator{\Chains}{Chains}
\DeclareMathOperator{\Cochains}{Cochains}
\DeclareMathOperator{\Homology}{H}
\renewcommand\H\Homology
\DeclareMathOperator{\Frob}{Frob}
\DeclareMathOperator\shBD{shBDF}
\newcommand\shBDF\shBD
\DeclareMathOperator{\qloc}{QLoc}
\DeclareMathOperator{\inv}{inv}
\DeclareMathOperator{\di}{di}
\DeclareMathOperator{\pr}{pr}
\DeclareMathOperator{\op}{op}
\newcommand\h{\mathbf{h}}
\newcommand\sh{\mathbf{sh}}
\DeclareMathOperator{\diag}{diag}
\newcommand\mono{\hookrightarrow}
\newcommand\epi{\twoheadrightarrow}
\newcommand\onto\epi
\newcommand\<\langle
\renewcommand\>\rangle
\newcommand\Circ{\ocircle}
\newcommand\shriek{{\textnormal{\textexclamdown}}}
\title[$\Chains(\RR)$\MakeLowercase{ does not admit a geometrically meaningful properadic homotopy Frobenius algebra structure}]{\texorpdfstring{$\mathbf{C\lowercase{hains}}(\RR)$\MakeLowercase{ does not admit a geometrically meaningful properadic homotopy \uppercase{F}robenius algebra structure}}{Chains(R) does not admit a geometrically meaningful properadic homotopy Frobenius algebra structure}}
\author{Theo Johnson-Freyd}
\begin{document}

\begin{abstract}
  The embedding $\Chains_{\bullet}(\RR) \mono \Cochains^{1-\bullet}(\RR)$ as the compactly supported cochains might lead one to expect $\Chains_{\bullet}(\RR)$ to carry a nonunital commutative Frobenius algebra structure, up to a degree shift and some homotopic weakening of the axioms.  We prove that under reasonable ``locality'' conditions, a cofibrant resolution of the dioperad controlling nonunital shifted-Frobenius algebras does act on $\Chains_{\bullet}(\RR)$, and in a homotopically-unique way.  But we prove that this action does not extend to a homotopy Frobenius action at the level of properads or props.  This gives an example of a geometrically meaningful algebraic structure on homology that does not lift in a geometrically meaningful way to the chain level.
\end{abstract}

\maketitle

\section{Introduction}

A basic tenet of algebraic topology is that algebraic structures on the homology of a space should come from  algebraic structures at the chain level,  with the understanding that equations are weakened to homotopy equivalences.  Given some structure on homology, one then can pose the following questions: What is the appropriate weakening from equation to homotopy necessary to define a structure on chains?  What is that structure at the chain level?

Here is an unsatisfying answer.  Suppose that $M$ is some space, and let $\H_{\bullet}(M)$ denote its homology with coefficients in a field.  Suppose that $P$ is a properad (or operad, or dioperad, or \dots) that acts on $\H_{\bullet}(M)$, and let $\h P$ denote any cofibrant replacement of~$P$.  Let $\C_{\bullet}(M)$ denote the complex of chains for your favorite chain model.  There are myriad ways to choose a deformation retraction of $\C_{\bullet}(M)$ onto $\H_{\bullet}(M)$.  Any such choice determines, via homotopy transfer theory, an action of $\h P$ on $\C_{\bullet}(M)$ that induces the action of $P$ on $\H_{\bullet}(M)$.  The space of choices required to carry out this procedure is contractible, which is to say there is no choice at all.

One reason the above answer is unsatisfying is that it doesn't lead to any further insight into the topology of $M$ than what was already available  from the action of $P$ on $\H_{\bullet}(M)$.  Rather, when one asks to lift the $P$-action to the chain level, one usually means that the action should be ``geometrically meaningful,'' preferably with some notion of ``locality'' built in.  For example, the cohomology $\H^{\bullet}(M)$ is naturally a commutative algebra; the multiplication corresponds to the diagonal map $M \mono M \times M$.  One can look for cochain-level multiplications $\C^{\bullet}(M) \otimes \C^{\bullet}(M) \to \C^{\bullet}(M)$ that respect some notion of ``locality,'' and extend such a multiplication to an action by some cofibrant replacement $\h\mathrm{Com}$ of the commutative operad $\mathrm{Com}$.  By using a deformation retraction of $\C^{\bullet}(M)$ onto $\H^{\bullet}(M)$ and some homotopy transfer theory, one can then use the $\h\mathrm{Com}$-structure on $\C^{\bullet}(M)$ to build  an $\h\mathrm{Com}$-algebra structure on $\H^{\bullet}(M)$.  This structure will begin with the commutative multiplication, but include more data, namely the \define{Massey products}.

However, as this paper demonstrates, not all algebraic structures on homology lift in a geometric way to the chain level.  Define a $\Frob_{1}$-algebra to be a graded commutative Frobenius algebra in which the comultiplication has homological degree $0$ but the multiplication has homological degree $-1$; the main example is the homology $\H_{\bullet}(S^{1})$ of a circle.  We will focus on  ``open and coopen'' Frobenius algebras, which need not have unit or counit, and can be infinite-dimensional.  It is reasonable to expect the chains $\C_{\bullet}(\RR)$ on the line to support an interesting $\Frob_{1}$-algebra structure: if we take our chain model to be the complex $\C_{\bullet}(\RR) = \Omega^{1-\bullet}_{\mathrm{cpt},\mathrm{sm}}(\RR)$ of compactly-supported smooth de Rham forms, then we get a non-unital degree-$(-1)$ multiplication which is strictly (anti)commutative and associative; if we take our chain model to be the complex $\C_{\bullet}(\RR) = \Omega^{1-\bullet}_{\mathrm{cpt},\mathrm{dist}}(\RR)$ of compactly supported distributional de Rham forms, then we get a degree-$0$ comultiplication which is strictly cocommutative and coassociative.
Indeed, we will prove that any reasonable model of $\C_{\bullet}(\RR)$ supports a canonical (up to a contractible space of choices) $\h\Frob_{1}$-algebra structure generalizing these (co)multiplications, provided $\h\Frob_{1} = \h^{\di}\Frob_{1}$ is resolved as a \define{dioperad} (meaning only tree-like compositions are used).  But we will prove that $\C_{\bullet}(\RR)$ does not support any geometrically meaningful action of the \define{properadic} (graph-like compositions) cofibrant resolution $\h^{\pr}\Frob_{1}$.  This contradicts the main result of~\cite{Wilson2007}.

\subsection{Outline} In Section~\ref{section.qloc} we define the notion of \define{quasilocality} that we take as a minimal requirement for a chain-level structure to be ``geometrically meaningful.''  
As explained in Lemma~\ref{lemma.cochains}, quasilocal operations extend canonically from chains to ``non-compactly supported chains,'' which is a degree-shifted model of cochains.  Such extensions will allow us to tell apart operations that are homotopically equivalent among all operations on $\C_{\bullet}(\RR)$ --- the wedge product of compactly supported forms from the zero product, for example.

Section~\ref{section.diproperads} recalls the basic theory of dioperads and properads, and introduces our main example, the (di/pr)operad $\Frob_{1}$ controlling nonunital noncounital commutative Frobenius algebras in which the multiplication has homological degree $-1$.  The Koszulity of $\Frob_{1}$ is proven in Section~\ref{section.Koszul}, and used to compute a small cofibrant replacement $\sh\Frob_{1}$.  The main results are in Section~\ref{section.mainthms}: Theorem~\ref{thm.dioperadic} constructs a contractible space of quasilocal translation-invariant actions of the dioperad $\sh^{\di}\Frob_{1}$ on $\Chains_{\bullet}(\RR)$, and Theorem~\ref{thm.properadic} proves that the properad $\sh^{\pr}\Frob_{1}$ does not act quasilocally in a way that induces both the comultiplication on homology and (via the extension to cochains) the multiplication on (degree-shifted) cohomology.

\subsection{Acknowledgements} Conversations with Kevin Costello, Gabriel Drummond-Cole, Joey Hirsh, Nick Rozenblyum, Thomas Willwacher, Scott Wilson, and Bruno Vallette were both enjoyable and valuable.  The anonymous referee's suggestions led to important improvements in the exposition.  This work is supported by the NSF grant DMS-1304054.

\subsection{Conventions}

We work over a ground field of characteristic $0$, which we will call $\QQ$, even though the reader may in fact choose to use de Rham forms with coefficients in $\RR$.  We always use \define{homological} conventions --- the differential has homological degree $-1$ --- and denote the category of chain complexes by $\DGVect$.  We use the usual Koszul sign rules: the canonical isomorphism $V \otimes W \cong W \otimes V$, for $V,W\in \DGVect$, sends $v \otimes w \mapsto (-1)^{(\deg v)(\deg w)}w\otimes v$ if $v$ and $w$ are homogeneous.  We let $[n]$ denote the one-dimensional graded vector space satisfying $\dim [n]_{n} = 1$ and $\dim [n]_{\bullet} = 0$ if $\bullet\neq n$, and we shift chain complexes by $V[n] = V \otimes [n]$; note that this introduces signs to formulas involving homogeneous elements.  Our primary references for the theory of properads, including their homotopy theory and Koszul duality, are~\cite{MR2320654,MR2560406,MR2572248}.

\section{Quasilocality}\label{section.qloc}

We fix a model of chains $\C_{\bullet}(\RR^{n})$, defined (at least) on $n$-dimensional Euclidean space.  Here are some models that work in our construction (many others work as well, and the   reader may always check their favorite chain model):
\begin{enumerate}
  \item $\C_{\bullet}(\RR^{n}) =  \Omega^{n-\bullet}_{\mathrm{cpt},\mathrm{sm}}(\RR^{n})$, the complex of smooth compactly-supported de Rham forms, shifted in homological degree to make it a model of chains.
  \item $\C_{\bullet}(\RR^{n}) =  \Omega^{n-\bullet}_{\mathrm{cpt,dist}}(\RR^{n})$, the complex of distributional compactly-supported de Rham forms, similarly shifted.
  \item $\C_{\bullet}(\RR^{n}) = \operatorname{span}_{\QQ}\{ c_{\vec z}, \vec z\in (\ZZ \oplus \ZZ+\frac12)^{n} \}$ with $\deg c_{\vec z} = $ number of entries in $\vec z$ in $\ZZ+\frac12$, and differential extending $\partial c_{z+\frac12} = c_{z+1}-c_{z}$ for $z\in \ZZ$ via the Leibniz rule $\partial c_{(\vec x,\vec y)} = \partial c_{\vec x}\otimes c_{\vec y} + (-1)^{\deg c_{\vec x}} c_{\vec x}\otimes\partial c_{\vec y}$, where we identify $c_{(\vec x,\vec y)} = c_{\vec x}\otimes c_{\vec y}$.  This is the complex of cellular chains for the cubulation of $\RR^{n}$ given by slicing along the hyperplanes $\RR^{k-1}\times \{z\} \times \RR^{n-k}$ for $z\in \ZZ$ and $k = 1,\dots,n$.
\end{enumerate}
For these models, one has a canonical isomorphism $\C_{\bullet}(\RR^{m}) \otimes \C_{\bullet}(\RR^{n}) \cong \C_{\bullet}(\RR^{m+n})$; the tensor product is the algebraic tensor product if cellular chains are used, but the projective tensor product for the de Rham models.  

For each of these models there are two corresponding cochain models.  Denote by $\C^{-\bullet}(\RR^{n})$ the model of cochains that receives an inclusion $\C_{\bullet}(\RR^{n}) \mono \C^{n-\bullet}(\RR^{n})$ of complexes identifying the chains as the compactly supported cochains (the minus sign in $\C^{-\bullet}$ is to emphasize that we will always use homological conventions, hence  cohomology is supported in negative degrees).  Denote by $(\C_{\bullet}(\RR^{n}))^{\vee}$ the cochain model given as the linear dual to the chosen chain model.  In our examples these are:
\begin{enumerate}
  \item When $\C_{\bullet}(\RR^{n}) =  \Omega^{n-\bullet}_{\mathrm{cpt},\mathrm{sm}}(\RR^{n})$, 
  we have $\C^{-\bullet}(\RR^{n}) = \Omega^{-\bullet}_{\mathrm{sm}}(\RR^{n})$, and $(\C_{\bullet}(\RR^{m}))^{\vee} = \Omega^{-\bullet}_{\mathrm{dist}}(\RR^{n})$.
  \item When $\C_{\bullet}(\RR^{n}) =  \Omega^{n-\bullet}_{\mathrm{cpt,dist}}(\RR^{n})$, 
  we have $\C^{-\bullet}(\RR^{n}) = \Omega^{-\bullet}_{\mathrm{dist}}(\RR^{n})$, and $(\C_{\bullet}(\RR^{m}))^{\vee} = \Omega^{-\bullet}_{\mathrm{sm}}(\RR^{n})$.
  \item When $\C_{\bullet}(\RR^{n}) = $ cellular chains   for the cubulation along the hyperplanes $\RR^{k-1}\times \{z\} \times \RR^{n-k}$ for $z\in \ZZ$, we take $\C^{n-\bullet}(\RR^{n})$ to be the completion of $\C_{\bullet}(\RR^{n})$ in which one may take infinite sums of basis cells provided no cell appears with an infinite coefficient.  The dual complex $(\C_{\bullet}(\RR^{m}))^{\vee}$ is formed in the same way as $\C^{-\bullet}(\RR^{n})$, except using the cubulation given by dividing $\RR^{n}$ along hyperplanes $\RR^{k-1}\times \{z+\frac12\} \times \RR^{n-k}$ for $z\in \ZZ$.
\end{enumerate}

Any linear map $f: \C_{\bullet}(\RR^{m}) \to \C_{\bullet}(\RR^{n})$ (satisfying, in the de Rham models, some continuity properties) defines a cochain on $\RR^{m+n}$, called its \define{graph}:
using as necessary the projective tensor product, one has
 $f\in (\C_{\bullet}(\RR^{m}))^{\vee}\otimes \C_{\bullet}(\RR^{n}) \mono (\C_{\bullet}(\RR^{m}))^{\vee}\otimes \C^{n-\bullet}(\RR^{n})$, which is, up to a shift of homological degrees, a cochain model on $\RR^{m+n}$.  For the de Rham models, $\operatorname{graph}(f)$ is the integral kernel of $f$, and for cellular chains $\operatorname{graph}(f) \in \C^{\bullet}(\RR^{m+n})$ simply records the matrix coefficients of $f$.  In particular, it makes sense to talk about the \define{support} of $\operatorname{graph}(f)$.

Let $\diag: \RR \mono \RR^{m+n}$ denote the diagonal embedding.  For $\ell \in \RR_{>0}$, let $B_{\ell}(\diag(\RR))$ denote the closed tubular neighborhood of $\diag(\RR)$ of radius $\ell$.

\begin{definition}\label{defn.qloc}
  A linear map $f: \C_{\bullet}(\RR^{m}) \to \C_{\bullet}(\RR^{n})$, for $m,n > 0$, is \define{$\ell$-quasilocal} if its graph is supported in $B_{\ell}(\diag(\RR))$, and \define{quasilocal} if it is $\ell$-quasilocal for some $\ell > 0$.  It is clear that the boundary $[\partial,f] = \partial \circ f - (-1)^{\deg f}f\circ \partial$ of $f$ is $\ell$-quasilocal if $f$ is --- its graph is $ \partial \bigl(\operatorname{graph}(f)\bigr)$.  The complex of all quasilocal maps $\C_{\bullet}(\RR^{m}) \to \C_{\bullet}(\RR^{n})$ is denoted $\qloc(m,n)$.
  
  A linear map $f: \C_{\bullet}(\RR^{m}) \to \C_{\bullet}(\RR^{n})$, for $m,n > 0$, is \define{translation-invariant} if it is equivariant for the translation action of $\RR$ (for the de Rham models) or $\ZZ$ (for cellular chains) acting on $\RR$.  The subcomplex of $\qloc(m,n)$ consisting of translation-invariant maps is denoted $\qloc^{\inv}(m,n)$.
\end{definition}

Quasilocality provides a good chain-level version of ``locality'' for the purposes of intersection theory, since often one must perturb things slightly (thus ruining any ``strict'' locality) to make  intersections well-defined.  Translation-invariance is a reasonable request for any geometrically-meaningful structure on $\RR$; we achieve translation invariance in Theorem~\ref{thm.dioperadic}, and do not demand it in the ``no-go'' Theorem~\ref{thm.properadic}.

An important side effect of quasilocality is that quasilocal maps extend from chains to cochains:

\begin{lemma}\label{lemma.cochains}
  Any
  $f \in \qloc(m,n)$ defines a map $\C^{1-\bullet}(\RR)^{\otimes m} \to \C^{1-\bullet}(\RR)^{\otimes n}$, in addition to the map $\C_{\bullet}(\RR)^{\otimes m} \to \C_{\bullet}(\RR)^{\otimes n}$ given in Definition~\ref{defn.qloc}.  These two versions of $f$ are intertwined by the inclusion $\C_{\bullet}(\RR) \mono \C^{1-\bullet}(\RR)$.
  
    In particular, if $f$ is $\partial$-closed, then it defines a map $\H_{\bullet}(f) : \QQ = \H_{\bullet}(\RR)^{\otimes m} \to \H_{\bullet}(\RR)^{\otimes n} = \QQ$ and also a map $\H^{1-\bullet}(f) : \QQ[m] = \H^{1-\bullet}(\RR)^{\otimes m}\to \H^{1-\bullet}(\RR)^{\otimes n} = \QQ[n]$. 
\end{lemma}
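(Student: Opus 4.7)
The plan is to exploit the support condition on the graph of $f$. Since $\operatorname{graph}(f)$ is supported in the tubular neighborhood $B_\ell(\diag(\RR))\subset \RR^{m+n}$, its fiber over any fixed $y \in \RR^n$ is contained in the bounded box $\{x : |x_i - y_j| \le 2\ell \text{ for all } i,j\}$. Thus pairing the graph against an input only involves the input's restriction to a bounded region, so the input need not be compactly supported.

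Concretely, I would reinterpret the cochain model $\C^{1-\bullet}(\RR)^{\otimes m}$ in each of the three examples as the space of ``non-compactly-supported chains'' on $\RR^m$ --- arbitrary smooth forms, arbitrary distributional forms, or cellular chains with locally finite (but possibly infinite) coefficients. In each case one defines the extended $f$ by the same pairing as before: viewing $\operatorname{graph}(f) \in (\C_\bullet(\RR^m))^\vee \otimes \C_\bullet(\RR^n)$, one contracts the first factor against the non-compactly-supported input. The bounded-fiber observation above ensures that this contraction yields a well-defined (locally finite, smooth, or distributional) element of $\C^{n-\bullet}(\RR^n)$. Compatibility with the inclusion $\C_\bullet(\RR) \mono \C^{1-\bullet}(\RR)$ is then tautological: on compactly supported inputs, the same pairing formula computes $f$.

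The final statement about $\H_\bullet(f)$ and $\H^{1-\bullet}(f)$ then follows by functoriality together with the standard identifications $\H_\bullet(\RR) = \QQ$ in degree $0$ and $\H^{1-\bullet}(\RR) = \QQ$ in degree $1$. The only delicate point is verifying the regularity of the output in the de Rham models: one must check that contracting a bounded-support kernel against a non-compactly-supported smooth (resp.\ distributional) form yields a smooth (resp.\ distributional) form, rather than something wilder. This is precisely where the implicit continuity hypothesis on $f$, alluded to in the parenthetical before Definition~\ref{defn.qloc}, enters; given that regularity, the conclusion is standard from the theory of Schwartz kernels, and for the cellular model the analogous statement is the trivial observation that a locally finite matrix acts on locally finite column vectors.
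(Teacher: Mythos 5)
Your proposal is correct and follows essentially the same route as the paper: both arguments reduce to the observation that $\operatorname{supp}(\operatorname{graph}(f))\subseteq B_{\ell}(\diag(\RR))$ forces the intersection of the support with $\RR^{m}\times V$ to be bounded for every bounded $V\subseteq\RR^{n}$, so the kernel can be paired against non-compactly-supported inputs. Your extra remarks on regularity in the de Rham models and on the locally finite cellular model are consistent with the continuity caveat the paper builds into the definition of the graph.
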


\begin{proof}
  A cochain $f$ on $\RR^{m+n}$ (for the cochain model $(\C_{\bullet}(\RR^{m}))^{\vee}\otimes \C^{-\bullet}(\RR^{n})$) with support $\operatorname{supp}(f)$ is the graph of a linear map $\C_{\bullet}(\RR^{m}) \to \C_{\bullet}(\RR^{n})$ if and only if for every bounded set $U \subseteq \RR^{m}$, the intersection $(U \times \RR^{n})\cap \operatorname{supp}(f)$ is bounded.  Similarly, $f$ defines a map $\C^{m-\bullet}(\RR^{m}) \to \C^{n-\bullet}(\RR^{n})$ if and only if $(\RR^{m}\times V) \cap  \operatorname{supp}(f)$ is bounded for all bounded $V \subseteq \RR^{n}$.  When $m,n>0$, both boundedness conditions hold when $\operatorname{supp}(f) \subseteq B_{\ell}(\diag(\RR))$.
\end{proof}

Finally, the space of quasilocal maps has a very manageable homology:

\begin{proposition} \label{homologyofqloc}
  The homology of $\qloc(m,n)$ and $\qloc^{\inv}(m,n)$ are: %one-dimensional in homological degrees $-m$ and $-m+1$, and vanishes elsewhere.
  \begin{gather*}
   \dim\H_{\bullet}\bigl(\qloc(m,n),\partial\bigr) = \begin{cases} 1, & \bullet =  -m+1 \\ 0, & \text{otherwise.} \end{cases} \\
   \dim\H_{\bullet}\bigl(\qloc^{\inv}(m,n),\partial\bigr) = \begin{cases} 1, & \bullet = -m \text{ or } -m+1 \\ 0, & \text{otherwise.} \end{cases} 
   \end{gather*}
\end{proposition}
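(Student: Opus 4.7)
The plan is to identify $\qloc(m,n)$ with a complex of cochains on $\RR^{m+n}$ supported in a tubular neighborhood of $\diag(\RR)$, and to compute this via the Thom isomorphism. By the discussion preceding Definition~\ref{defn.qloc}, an element of $\qloc(m,n)$ in homological degree $d$ is the same data as a cochain on $\RR^{m+n}$ (in the mixed cochain model $(\C_\bullet(\RR^m))^\vee \otimes \C^{n-\bullet}(\RR^n)$) whose support lies in $B_\ell(\diag(\RR))$ for some $\ell > 0$. A direct bookkeeping check, valid in each of the listed models, shows that homological degree $d$ in $\qloc(m,n)$ corresponds to cohomological degree $n-d$ for cochains on $\RR^{m+n}$, and the differential on $\qloc$ matches the de Rham (or cellular) differential.

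Next, I would write $\qloc(m,n) = \colim_{\ell} \qloc_\ell(m,n)$, where $\qloc_\ell$ denotes the subcomplex of $\ell$-quasilocal maps, i.e., those cochains supported in the closed tubular neighborhood $B_\ell(\diag(\RR))$. For $\ell' \leq \ell$, the inclusion $B_{\ell'} \mono B_\ell$ is a homotopy equivalence of tubular neighborhoods of $\diag(\RR)$, so the induced map on cochains with prescribed support is a quasi-isomorphism. Consequently the cohomology of $\qloc(m,n)$ agrees with that of any single $\qloc_\ell(m,n)$, which is the cohomology of cochains on $\RR^{m+n}$ supported on the closed submanifold $\diag(\RR)$. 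Applying the Thom isomorphism to $\diag(\RR) \subset \RR^{m+n}$ (codimension $m+n-1$, trivial normal bundle) yields one-dimensional cohomology concentrated in cohomological degree $m+n-1$, which translates via $d = n - k$ to homological degree $-m+1$.

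For the translation-invariant version, I would pass to the quotient $\RR^{m+n}/\RR_{\diag} \cong \RR^{m+n-1}$, under which the diagonal collapses to the origin. Translation-invariant forms on $\RR^{m+n}$ factor as a tensor product of forms on $\RR^{m+n-1}$ with the two-dimensional complex $\QQ \oplus \QQ \cdot dx$ of translation-invariant forms on the $\RR$-direction (with zero differential; in the cellular model the analogous two-dimensional complex is spanned by the $\ZZ$-invariant sums over the two $\ZZ$-orbits of cells). Cochains on $\RR^{m+n-1}$ supported at the origin have one-dimensional cohomology in cohomological degree $m+n-1$, so tensoring with the two-dimensional factor yields one-dimensional cohomology in cohomological degrees $m+n-1$ and $m+n$, i.e., homological degrees $-m+1$ and $-m$, as required.

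The main obstacle is the careful degree-shift bookkeeping across the three chain models, together with verifying that each concrete cochain complex (smooth, distributional, or cellular) really does compute the local cohomology of $\RR^{m+n}$ near $\diag(\RR)$; the underlying topological input is just the standard Thom isomorphism.
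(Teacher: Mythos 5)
Your proposal is correct and follows essentially the same route as the paper: identify $\qloc(m,n)$ with cochains on $\RR^{m+n}$ supported in $B_\ell(\diag(\RR))$, pass to the colimit over $\ell$ along quasi-isomorphic inclusions, and apply the Thom isomorphism for $\diag(\RR)\subset\RR^{m+n}$ (the paper phrases this as the tube contracting onto the diagonal, so that the supported cochains compute the cohomology of $\RR$, and the translation-invariant ones that of a circle). Your degree bookkeeping $d = n-k$ and the identification of the extra two-dimensional invariant factor $\QQ\oplus\QQ\cdot dx$ match the paper's computation exactly.
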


\begin{proof}
  The neighborhood $B_{\ell}(\diag(\RR))$ contracts onto $\diag(\RR)$ in a translation-invariant way.  Thus the complex of 
  cochains supported in $B_{\ell}(\diag(\RR))$ has, up to a shift of degrees, the cohomology of $\RR$,  the complex of
  translation-invariant cochains supported in $B_{\ell}(\diag(\RR))$ has the cohomology of a circle, and the inclusions as $\ell$ increases are quasiisomorphisms.  
  
  To compute the degree shifts, it is convenient to think in terms of one of the de Rham models.  The non-zero class in $\C^{-\bullet}(\RR)$ is represented by a form supported in  $B_{\ell}(\diag(\RR)) \subseteq \RR^{m+n}$ of form degree $m+n-1$ (e.g.\ one may take the constant function $1$ in the direction parallel to $\diag(\RR)$ tensored with a top form of total volume $1$ in the transverse $\RR^{m+n-1}$).  Thus the non-zero class in $\qloc(m,n)$ has its graph in homological degree $1-(m+n)$ in $(\C_{\bullet}(\RR^{m}))^{\vee}\otimes \C^{-\bullet}(\RR^{n})$, or in homological degree $1-(m+n) + n = 1-m$ in the shifted complex $(\C_{\bullet}(\RR^{m}))^{\vee}\otimes \C^{n-\bullet}(\RR^{n})\supseteq \qloc(m,n)$.  The other cohomology class in $S^{1}$ is in cohomological degree one higher, which is to say in homological degree one lower.
\end{proof}

\section{Dioperads and properads} \label{section.diproperads}

Dioperads were introduced in~\cite{MR1960128} and properads in~\cite{MR2320654}.  Both provide frameworks in which to axiomatize algebraic structures with many-to-many operations.  There are many equivalent definitions; we will use the following.

\begin{definition}
  Let $\SS$ denote the groupoid of finite sets and bijections.  An \define{$\SS$-bimodule} is a functor $P : \SS^{\op}\times \SS \to \DGVect$.  Thus, the data of an $\SS$-bimodule is a collection of chain complexes $P(m,n)$ for $(m,n)\in \NN^{2}$, along with, for each $(m,n)$, an action on $P(m,n)$ of $\SS_{m}^{\op} \times \SS_{n}$, where $\SS_{n}$ denotes the symmetric group on $n$ letters.
\end{definition}

\begin{definition} \label{defn.diproperad}
  A \define{nonunital properad} is an $\SS$-bimodule along with, for every tuple of finite sets $\mathbf{m}_{1},\mathbf{m}_{2},\mathbf{n}_{1},\mathbf{n}_{2},\mathbf{k}$ with $\mathbf{k}$ nonempty, a \define{composition} map:
  $$
  \begin{tikzpicture}[baseline=(basepoint),yscale=1.25]
  \path(0,.25) coordinate (basepoint);
  \draw (-.5,.5) node[circle,draw,inner sep=3,fill=gray] (delta1) {};
  \draw (.5,0) node[circle,draw,inner sep=3,fill=gray] (delta2) {};
  \draw[onearrow]  (-.25,-.5) .. controls +(0,.25) and +(.25,-.5) .. (delta1);
  \draw[onearrow] (-.75,-.5) .. controls +(0,.25) and +(-.25,-.25) .. (delta1);
  \draw[onearrow]  (.25,-.5) .. controls +(0,.25) and +(-.25,-.25) .. (delta2);
  \draw[onearrow] (.75,-.5) .. controls +(0,.25) and +(.25,-.25) .. (delta2);
  \draw[onearrow] (delta1) .. controls +(-.25,.25) and +(0,-.25) .. (-.85,1.1);
  \draw[onearrow] (delta1) .. controls +(.25,.25) and +(0,-.25) .. (-.15,1.1);
  \draw[onearrow] (delta2) .. controls +(-.25,.5) and +(0,-.25) .. (.15,1.1);
  \draw[onearrow] (delta2) .. controls +(.25,.25) and +(0,-.25) .. (.85,1.1);
  \draw[] (-.45,1) node {$\scriptstyle \dots$} (.55,1) node {$\scriptstyle \dots$};
  \draw[] (-.45,-.35) node {$\scriptstyle \dots$} (.55,-.35) node {$\scriptstyle \dots$};
  \draw[] (delta2) ..controls +(-.5,.125) and +(.25,-.25) .. (delta1);
  \draw[] (delta2) ..controls +(-.25,.25) and +(.5,-.125) .. (delta1);
  \draw[] (0,.25) node[anchor=base] {$\scriptscriptstyle \cdots$};
    \draw[decorate,decoration=brace] (-.9,1.15) -- node[auto] {$\scriptstyle \mathbf n_{2}$} (-.1,1.15);
    \draw[decorate,decoration=brace] (.1,1.15) -- node[auto] {$\scriptstyle \mathbf n_{1}$} (.9,1.15);
    \draw[decorate,decoration=brace] (-.2,-.55) -- node[auto] {$\scriptstyle \mathbf m_{2}$} (-.8,-.55);
    \draw[decorate,decoration=brace] (.8,-.55) -- node[auto] {$\scriptstyle \mathbf m_{1}$} (.2,-.55);
    \draw[decorate,decoration=brace] (.25,.35) -- node[anchor=north] {$\scriptstyle \mathbf k$} (-.2,.15);
\end{tikzpicture}
:
   P\bigl(\mathbf m_{1},\mathbf k \sqcup \mathbf n_{1} \bigr) \otimes P\bigl(\mathbf m_{2} \sqcup \mathbf k,\mathbf n_{2}\bigr) \to P\bigl(\mathbf m_{1} \sqcup \mathbf m_{2}, \mathbf n_{1} \sqcup \mathbf n_{2} \bigr) $$
   The composition maps should be compatible with the $\SS$-bimodule structure in the obvious way from the picture.  Moreover, we demand an \define{associativity} condition, which is actually four conditions for the types of connected directed  graphs with three vertices and no directed cycles:
  $$
  \begin{tikzpicture}[baseline=(v2.base)]
    \path (0,0) node[draw,circle,inner sep = 1pt] (v1) {$v_{1}$};
    \path (.7,.7) node[draw,circle,inner sep = 1pt] (v2) {$v_{2}$};
    \path (-.7,1.4) node[draw,circle,inner sep = 1pt] (v3) {$v_{3}$};
    \draw[tensor,onearrow] (v1) -- (v2);
    \draw[tensor,onearrow] (v1) -- (v3);
    \draw[tensor,onearrow] (v2) -- (v3);
  \end{tikzpicture}\,,\quad
  \begin{tikzpicture}[baseline=(v2.base)]
    \path (0,0) node[draw,circle,inner sep = 1pt] (v1) {$v_{1}$};
    \path (.7,.7) node[draw,circle,inner sep = 1pt] (v2) {$v_{2}$};
    \path (-.7,1.4) node[draw,circle,inner sep = 1pt] (v3) {$v_{3}$};
    \draw[tensor,onearrow] (v1) -- (v2);
%    \draw[tensor,onearrow] (v1) -- (v3);
    \draw[tensor,onearrow] (v2) -- (v3);
  \end{tikzpicture}\,,\quad
  \begin{tikzpicture}[baseline=(v2.base)]
    \path (0,0) node[draw,circle,inner sep = 1pt] (v1) {$v_{1}$};
    \path (.7,.7) node[draw,circle,inner sep = 1pt] (v2) {$v_{2}$};
    \path (-.7,1.4) node[draw,circle,inner sep = 1pt] (v3) {$v_{3}$};
%    \draw[tensor,onearrow] (v1) -- (v2);
    \draw[tensor,onearrow] (v1) -- (v3);
    \draw[tensor,onearrow] (v2) -- (v3);
  \end{tikzpicture}\,,\quad
  \begin{tikzpicture}[baseline=(v2.base)]
    \path (0,0) node[draw,circle,inner sep = 1pt] (v1) {$v_{1}$};
    \path (.7,.7) node[draw,circle,inner sep = 1pt] (v2) {$v_{2}$};
    \path (-.7,1.4) node[draw,circle,inner sep = 1pt] (v3) {$v_{3}$};
    \draw[tensor,onearrow] (v1) -- (v2);
    \draw[tensor,onearrow] (v1) -- (v3);
%    \draw[tensor,onearrow] (v2) -- (v3);
  \end{tikzpicture}
  $$
  The reader is invited to spell out the details of the associativity equations; note that for the last two, one must reverse the order of two factors in a tensor product, and this introduces signs.
  
  A \define{nonunital dioperad} is as above, but the only compositions that are defined are when $\mathbf k$ is a set of size $1$:
  $$
  \begin{tikzpicture}[baseline=(basepoint),yscale=1.25]
  \path(0,.25) coordinate (basepoint);
  \draw (-.5,.5) node[circle,draw,inner sep=3,fill=gray] (delta1) {};
  \draw (.5,0) node[circle,draw,inner sep=3,fill=gray] (delta2) {};
  \draw[onearrow]  (-.25,-.5) .. controls +(0,.25) and +(.25,-.5) .. (delta1);
  \draw[onearrow] (-.75,-.5) .. controls +(0,.25) and +(-.25,-.25) .. (delta1);
  \draw[onearrow]  (.25,-.5) .. controls +(0,.25) and +(-.25,-.25) .. (delta2);
  \draw[onearrow] (.75,-.5) .. controls +(0,.25) and +(.25,-.25) .. (delta2);
  \draw[onearrow] (delta1) .. controls +(-.25,.25) and +(0,-.25) .. (-.85,1.1);
  \draw[onearrow] (delta1) .. controls +(.25,.25) and +(0,-.25) .. (-.15,1.1);
  \draw[onearrow] (delta2) .. controls +(-.25,.5) and +(0,-.25) .. (.15,1.1);
  \draw[onearrow] (delta2) .. controls +(.25,.25) and +(0,-.25) .. (.85,1.1);
  \draw[] (-.45,1) node {$\scriptstyle \dots$} (.55,1) node {$\scriptstyle \dots$};
  \draw[] (-.45,-.35) node {$\scriptstyle \dots$} (.55,-.35) node {$\scriptstyle \dots$};
  \draw[onearrow] (delta2) -- (delta1);
    \draw[decorate,decoration=brace] (-.9,1.15) -- node[auto] {$\scriptstyle \mathbf n_{2}$} (-.1,1.15);
    \draw[decorate,decoration=brace] (.1,1.15) -- node[auto] {$\scriptstyle \mathbf n_{1}$} (.9,1.15);
    \draw[decorate,decoration=brace] (-.2,-.55) -- node[auto] {$\scriptstyle \mathbf m_{2}$} (-.8,-.55);
    \draw[decorate,decoration=brace] (.8,-.55) -- node[auto] {$\scriptstyle \mathbf m_{1}$} (.2,-.55);
\end{tikzpicture}
:
   P\bigl(\mathbf m_{1}, \{*\} \sqcup\mathbf n_{1}\bigr) \otimes P\bigl(\mathbf m_{2} \sqcup \{*\},\mathbf n_{2}\bigr) \to P\bigl(\mathbf m_{1} \sqcup \mathbf m_{2}, \mathbf n_{1} \sqcup \mathbf n_{2} \bigr) $$
   There are associativity axioms for each of the following types of diagrams:
  $$
  \begin{tikzpicture}[baseline=(v2.base)]
    \path (0,0) node[draw,circle,inner sep = 1pt] (v1) {$v_{1}$};
    \path (.7,.7) node[draw,circle,inner sep = 1pt] (v2) {$v_{2}$};
    \path (-.7,1.4) node[draw,circle,inner sep = 1pt] (v3) {$v_{3}$};
    \draw[,onearrow] (v1) -- (v2);
%    \draw[tensor,onearrow] (v1) -- (v3);
    \draw[,onearrow] (v2) -- (v3);
  \end{tikzpicture}\,,\quad
  \begin{tikzpicture}[baseline=(v2.base)]
    \path (0,0) node[draw,circle,inner sep = 1pt] (v1) {$v_{1}$};
    \path (.7,.7) node[draw,circle,inner sep = 1pt] (v2) {$v_{2}$};
    \path (-.7,1.4) node[draw,circle,inner sep = 1pt] (v3) {$v_{3}$};
%    \draw[tensor,onearrow] (v1) -- (v2);
    \draw[,onearrow] (v1) -- (v3);
    \draw[,onearrow] (v2) -- (v3);
  \end{tikzpicture}\,,\quad
  \begin{tikzpicture}[baseline=(v2.base)]
    \path (0,0) node[draw,circle,inner sep = 1pt] (v1) {$v_{1}$};
    \path (.7,.7) node[draw,circle,inner sep = 1pt] (v2) {$v_{2}$};
    \path (-.7,1.4) node[draw,circle,inner sep = 1pt] (v3) {$v_{3}$};
    \draw[,onearrow] (v1) -- (v2);
    \draw[,onearrow] (v1) -- (v3);
%    \draw[tensor,onearrow] (v2) -- (v3);
  \end{tikzpicture}
  $$
  
  A \define{coproperad} has instead a \define{decomposition} map $ P\bigl(\mathbf m_{1} \sqcup \mathbf m_{2}, \mathbf n_{1} \sqcup \mathbf n_{2} \bigr) \to P\bigl(\mathbf m_{1},\mathbf k \sqcup \mathbf n_{1} \bigr) \otimes P\bigl(\mathbf m_{2} \sqcup \mathbf k,\mathbf n_{2}\bigr) $ for each tuple $(\mathbf m_{1},\mathbf m_{2},\mathbf n_{2},\mathbf n_{2},\mathbf k)$, satisfying coassociativity axioms.  A \define{codioperad} similarly has decomposition maps whenever $\mathbf k = \{*\}$.
\end{definition}

Our convention will be that (co)(di/pr)operads may be non(co)unital.

\begin{definition}\label{defn.modelcatstr}
  For any $V \in \DGVect$, the (di/pr)operad $\End(V)$ satisfies $\End(V)(\mathbf m,\mathbf n) = \hom(V^{\otimes \mathbf m},V^{\otimes \mathbf n})$.  An \define{action} of a (di/pr)operad $P$ on $V$ is a homomorphism $P \to \End(V)$.  If $V$ is equipped with an action of $P$, then we will call $V$ a \define{$P$-algebra}.
  
  The category of (di/pr)operads has a model category structure in which the weak equivalences are the quasiisomorphisms, and the fibrations are the surjections~\cite[Appendix~A]{MR2572248}.  
  Abstract nonsense of model categories guarantees that if $\h_{1}P$ and $\h_{2}P$ are any two cofibrant replacements of the same (di/pr)operad $P$, then we can turn any action of $\h_{1}P$ on $V$ into an action of $\h_{2}P$, and vice versa, and the spaces of choices required to do so are contractible.  Thus we are justified in saying that a \define{homotopy $P$-action on $V$} is an action on $V$ of any cofibrant replacement $\h P$ of $P$.
\end{definition}

There is a forgetful functor from properads to dioperads, whose left adjoint defines the \define{universal enveloping properad} of a dioperad.  The reader should be warned that these functors are known not to be exact~\cite[Theorem~47]{MR2560406}.
For comparison, define a \define{prop} to be an $\SS$-bimodule in which composition is defined even when $\mathbf k = \emptyset$ (and satisfying some extra commutativity/associativity axioms for disconnected  graphs).  By~\cite{MR2320654}, the forgetful functor from props to properads and its adjoint constructing the universal enveloping prop of a properad are exact.  In particular, properadic homotopy actions always extend to propic homotopy actions.

We now list the (di/pr)operads that will be of primary interest:

\begin{definition} \label{defn.qlocproperad}
  It follows from Lemma~\ref{lemma.cochains} and the triangle inequality that $\qloc$ and $\qloc^{\inv}$ are a sub-(di/pr)operads of both $\End(\C_{\bullet}(\RR))$ and $\End(\C^{1-\bullet}(\RR))$.  An action of $P$ on $\C_{\bullet}(\RR)$ or $\C^{1-\bullet}(\RR)$ is \define{quasilocal} if it factors through $\qloc$, and \define{quasilocal and translation invariant} if it factors through $\qloc^{\inv}$.
\end{definition}

\begin{definition}\label{defn.frob1}
  The (di/pr)operad $\Frob_{1}$ of \define{open and coopen $1$-shifted commutative Frobenius algebras} satisfies:
  $$ \dim \Frob_{1}(m,n)_{\bullet} = \begin{cases} 1, & m,n > 0,\ (m,n)\neq (1,1), \text{ and } \bullet = 1-m, \\ 0, & mn \leq 1 \text{ or } \bullet \neq 1-m.\end{cases} $$
  The $\SS_{n}$ action on $\Frob_{1}(m,n)$ is trivial, whereas $\SS_{m}$ acts via the sign representation.  The composition $P\bigl(\mathbf m_{1},\mathbf k \sqcup \mathbf n_{1} \bigr) \otimes P\bigl(\mathbf m_{2} \sqcup \mathbf k,\mathbf n_{2}\bigr) \to P\bigl(\mathbf m_{1} \sqcup \mathbf m_{2}, \mathbf n_{1} \sqcup \mathbf n_{2} \bigr)$ is $0$ unless $k = \{*\}$, in which case it is multiplication by $(-1)^{m_{2}}$.
\end{definition}

%\begin{remark}
  Implicit in the description of $\Frob_{1}$ given in Definition~\ref{defn.frob1} is a choice of basis vector for each vector space $\Frob_{1}(m,n)_{1-m}$.  It is convenient to denote this basis vector as an unmarked vertex with $m$ inputs and $n$ outputs:
  $$ \Frob_{1}(m,n)_{1-m} = \operatorname{span}_{\QQ} \left\{ \tikz[baseline=(basepoint)] {
    \path (0,0) coordinate (basepoint) (0,4pt) node[dot] {} %node[anchor=west]{$\scriptstyle \beta$} 
    (0,-7pt) node{$\scriptstyle \dots$} (0,15pt) node{$\scriptstyle \dots$};
    \draw[onearrow] (-8pt,-8pt) -- (0,4pt);
    \draw[onearrow] (8pt,-8pt) -- (0,4pt);
    \draw[onearrow] (0,4pt) -- (-8pt,16pt);
    \draw[onearrow] (0,4pt) -- (8pt,16pt);
    \draw[decorate,decoration=brace] (-9pt,18pt) -- node[auto] {$\scriptstyle n$} (9pt,18pt);
    \draw[decorate,decoration=brace] (9pt,-10pt) -- node[auto] {$\scriptstyle m$} (-9pt,-10pt);
   } \right\}$$
   The sign rules given describe compositions shaped as in Definition~\ref{defn.diproperad}:
   $$
 \begin{tikzpicture}[baseline=(basepoint),yscale=1.25]
  \path(0,.1) coordinate (basepoint);
  \draw (-.5,.5) node[dot] (delta1) {};
  \draw (.5,0) node[dot] (delta2) {};
  \draw[onearrow]  (-.25,-.5) .. controls +(0,.25) and +(.25,-.5) .. (delta1);
  \draw[onearrow] (-.75,-.5) .. controls +(0,.25) and +(-.25,-.25) .. (delta1);
  \draw[onearrow]  (.25,-.5) .. controls +(0,.25) and +(-.25,-.25) .. (delta2);
  \draw[onearrow] (.75,-.5) .. controls +(0,.25) and +(.25,-.25) .. (delta2);
  \draw[onearrow] (delta1) .. controls +(-.25,.25) and +(0,-.25) .. (-.85,1.1);
  \draw[onearrow] (delta1) .. controls +(.25,.25) and +(0,-.25) .. (-.15,1.1);
  \draw[onearrow] (delta2) .. controls +(-.25,.5) and +(0,-.25) .. (.15,1.1);
  \draw[onearrow] (delta2) .. controls +(.25,.25) and +(0,-.25) .. (.85,1.1);
  \draw[] (-.45,1) node {$\scriptstyle \dots$} (.55,1) node {$\scriptstyle \dots$};
  \draw[] (-.45,-.35) node {$\scriptstyle \dots$} (.55,-.35) node {$\scriptstyle \dots$};
  \draw[onearrow] (delta2) -- (delta1);
    \draw[decorate,decoration=brace] (-.9,1.15) -- node[auto] {$\scriptstyle n_{2}$} (-.1,1.15);
    \draw[decorate,decoration=brace] (.1,1.15) -- node[auto] {$\scriptstyle n_{1}$} (.9,1.15);
    \draw[decorate,decoration=brace] (-.2,-.55) -- node[auto] {$\scriptstyle m_{2}$} (-.8,-.55);
    \draw[decorate,decoration=brace] (.8,-.55) -- node[auto] {$\scriptstyle m_{1}$} (.2,-.55);
 \end{tikzpicture}
   = (-1)^{m_{2}}
   \tikz[baseline=(basepoint)] {
    \path (0,0) coordinate (basepoint) (0,4pt) node[dot] {} %node[anchor=west]{$\scriptstyle \beta$} 
    (0,-7pt) node{$\scriptstyle \dots$} (0,15pt) node{$\scriptstyle \dots$};
    \draw[onearrow] (-8pt,-8pt) -- (0,4pt);
    \draw[onearrow] (8pt,-8pt) -- (0,4pt);
    \draw[onearrow] (0,4pt) -- (-8pt,16pt);
    \draw[onearrow] (0,4pt) -- (8pt,16pt);
    \draw[decorate,decoration=brace] (-9pt,18pt) -- node[auto] {$\scriptstyle n_{1}+n_{2}$} (9pt,18pt);
    \draw[decorate,decoration=brace] (9pt,-10pt) -- node[auto] {$\scriptstyle m_{1}+m_{2}$} (-9pt,-10pt);
   }
   $$
   Other compositions can be computed using the symmetric group actions.  Permutations of the outgoing strands act trivially, but permutations of the incoming strands lead to signs.  For example, tracking the actions of both $\SS_{m_{2}+1}$ on the top vertex and $\SS_{m_{1}+m_{2}}$ on the resulting composition gives:
   $$
 \begin{tikzpicture}[baseline=(basepoint),yscale=1.25]
  \path(0,.1) coordinate (basepoint);
  \draw (.5,.5) node[dot] (delta1) {};
  \draw (-.5,0) node[dot] (delta2) {};
  \draw[onearrow]  (-.25,-.5) .. controls +(0,.25) and +(.25,-.5) .. (delta1);
  \draw[onearrow] (-.75,-.5) .. controls +(0,.25) and +(-.25,-.25) .. (delta1);
  \draw[onearrow]  (.25,-.5) .. controls +(0,.25) and +(-.25,-.25) .. (delta2);
  \draw[onearrow] (.75,-.5) .. controls +(0,.25) and +(.25,-.25) .. (delta2);
  \draw[onearrow] (delta1) .. controls +(-.25,.25) and +(0,-.25) .. (-.85,1.1);
  \draw[onearrow] (delta1) .. controls +(.25,.25) and +(0,-.25) .. (-.15,1.1);
  \draw[onearrow] (delta2) .. controls +(-.25,.5) and +(0,-.25) .. (.15,1.1);
  \draw[onearrow] (delta2) .. controls +(.25,.25) and +(0,-.25) .. (.85,1.1);
  \draw[] (-.45,1) node {$\scriptstyle \dots$} (.55,1) node {$\scriptstyle \dots$};
  \draw[] (-.45,-.35) node {$\scriptstyle \dots$} (.55,-.35) node {$\scriptstyle \dots$};
  \draw[onearrow] (delta2) -- (delta1);
    \draw[decorate,decoration=brace] (-.9,1.15) -- node[auto] {$\scriptstyle n_{2}$} (-.1,1.15);
    \draw[decorate,decoration=brace] (.1,1.15) -- node[auto] {$\scriptstyle n_{1}$} (.9,1.15);
    \draw[decorate,decoration=brace] (-.2,-.55) -- node[auto] {$\scriptstyle m_{2}$} (-.8,-.55);
    \draw[decorate,decoration=brace] (.8,-.55) -- node[auto] {$\scriptstyle m_{1}$} (.2,-.55);
 \end{tikzpicture}
   = 
   \tikz[baseline=(basepoint)] {
    \path (0,0) coordinate (basepoint) (0,4pt) node[dot] {} %node[anchor=west]{$\scriptstyle \beta$} 
    (0,-7pt) node{$\scriptstyle \dots$} (0,15pt) node{$\scriptstyle \dots$};
    \draw[onearrow] (-8pt,-8pt) -- (0,4pt);
    \draw[onearrow] (8pt,-8pt) -- (0,4pt);
    \draw[onearrow] (0,4pt) -- (-8pt,16pt);
    \draw[onearrow] (0,4pt) -- (8pt,16pt);
    \draw[decorate,decoration=brace] (-9pt,18pt) -- node[auto] {$\scriptstyle n_{1}+n_{2}$} (9pt,18pt);
    \draw[decorate,decoration=brace] (9pt,-10pt) -- node[auto] {$\scriptstyle m_{1}+m_{2}$} (-9pt,-10pt);
   }
   \quad \text{ and } \quad
 \begin{tikzpicture}[baseline=(basepoint),yscale=1.25]
  \path(0,.1) coordinate (basepoint);
  \draw (.5,.5) node[dot] (delta1) {};
  \draw (-.5,0) node[dot] (delta2) {};
  \draw[onearrow]  (.25,-.5) .. controls +(0,.25) and +(-.25,-.5) .. (delta1);
  \draw[onearrow] (.75,-.5) .. controls +(0,.25) and +(.25,-.25) .. (delta1);
  \draw[onearrow]  (-.25,-.5) .. controls +(0,.25) and +(.25,-.25) .. (delta2);
  \draw[onearrow] (-.75,-.5) .. controls +(0,.25) and +(-.25,-.25) .. (delta2);
  \draw[onearrow] (delta1) .. controls +(.25,.25) and +(0,-.25) .. (.85,1.1);
  \draw[onearrow] (delta1) .. controls +(-.25,.25) and +(0,-.25) .. (.15,1.1);
  \draw[onearrow] (delta2) .. controls +(.25,.5) and +(0,-.25) .. (-.15,1.1);
  \draw[onearrow] (delta2) .. controls +(-.25,.25) and +(0,-.25) .. (-.85,1.1);
  \draw[] (-.45,1) node {$\scriptstyle \dots$} (.55,1) node {$\scriptstyle \dots$};
  \draw[] (-.45,-.35) node {$\scriptstyle \dots$} (.55,-.35) node {$\scriptstyle \dots$};
  \draw[onearrow] (delta2) -- (delta1);
    \draw[decorate,decoration=brace] (-.9,1.15) -- node[auto] {$\scriptstyle n_{1}$} (-.1,1.15);
    \draw[decorate,decoration=brace] (.1,1.15) -- node[auto] {$\scriptstyle n_{2}$} (.9,1.15);
    \draw[decorate,decoration=brace] (-.2,-.55) -- node[auto] {$\scriptstyle m_{1}$} (-.8,-.55);
    \draw[decorate,decoration=brace] (.8,-.55) -- node[auto] {$\scriptstyle m_{2}$} (.2,-.55);
 \end{tikzpicture}
   = 
   (-1)^{m_{1}m_{2}}
   \tikz[baseline=(basepoint)] {
    \path (0,0) coordinate (basepoint) (0,4pt) node[dot] {} %node[anchor=west]{$\scriptstyle \beta$} 
    (0,-7pt) node{$\scriptstyle \dots$} (0,15pt) node{$\scriptstyle \dots$};
    \draw[onearrow] (-8pt,-8pt) -- (0,4pt);
    \draw[onearrow] (8pt,-8pt) -- (0,4pt);
    \draw[onearrow] (0,4pt) -- (-8pt,16pt);
    \draw[onearrow] (0,4pt) -- (8pt,16pt);
    \draw[decorate,decoration=brace] (-9pt,18pt) -- node[auto] {$\scriptstyle n_{1}+n_{2}$} (9pt,18pt);
    \draw[decorate,decoration=brace] (9pt,-10pt) -- node[auto] {$\scriptstyle m_{1}+m_{2}$} (-9pt,-10pt);
   }   
   $$
%\end{remark}

Definitions~\ref{defn.qlocproperad} and~\ref{defn.frob1} are closely related:
\begin{lemma}\label{lemma.hqlocfrob}
  $\H_{\bullet}(\qloc) = \Frob_{1}$.
\end{lemma}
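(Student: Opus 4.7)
The plan is to exhibit an explicit basis element of $\H_{1-m}(\qloc(m,n))$ for each $(m,n)$ with $m,n>0$, and then to read off the induced properadic structure directly. Dimensions already match by Proposition~\ref{homologyofqloc}: in each $(m,n)$, $\H_\bullet(\qloc(m,n))$ is one dimensional, concentrated in the same degree $1-m$ as $\Frob_1(m,n)$. Thus only the $\SS_m\times\SS_n$ action and the composition structure remain to be identified.

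Working in the smooth de Rham model, I would take a representative $\phi_{m,n}\in\qloc(m,n)$ given (as in the proof of Proposition~\ref{homologyofqloc}) by a form on $\RR^{m+n}$ that is constant along $\diag(\RR)$ and is a bump top form of integral $1$ on each orthogonal slice, supported in $B_\ell(\diag(\RR))$. Then $\phi_{m,n}$ has form degree $m+n-1$ on $\RR^{m+n}$, which becomes homological degree $1-m$ after the shifts built into the identification $\qloc(m,n)\subseteq (\C_\bullet(\RR^m))^\vee\otimes \C^{n-\bullet}(\RR^n)$.

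Next, verify the $\SS_m\times\SS_n$ action on the class $[\phi_{m,n}]$. Permutations of the $m+n$ coordinates act on $\phi_{m,n}$ by pullback, which on a top transverse form multiplies by the sign of the permutation. One must then correct for the asymmetry of the identification of $\qloc$ as a subcomplex of $(\C_\bullet(\RR^m))^\vee\otimes \C^{n-\bullet}(\RR^n)$: dualizing on the input side, together with the Koszul signs induced by the degree shift $\C_\bullet(\RR^n)=\Omega^{n-\bullet}_{\mathrm{cpt,sm}}(\RR^n)$, cancels the sign for output permutations but doubles it on input permutations. The net result is that $\SS_n$ acts trivially and $\SS_m$ acts by the sign representation, matching Definition~\ref{defn.frob1}.

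Finally, identify the composition. The properadic composition of $\phi_{m_1,n_1+k}$ and $\phi_{m_2+k,n_2}$ along $k$ strands convolves their graphs along the $k$ middle variables. A form-degree count is decisive: the wedge of the two representatives has transverse form degree $(m_1+n_1+k-1)+(m_2+k+n_2-1)$, and integration over the $k$ middle variables lowers this by $k$, giving $m_1+m_2+n_1+n_2+k-2$. The unique homology class in $\qloc(m_1+m_2,n_1+n_2)$ is represented in transverse form degree $m_1+m_2+n_1+n_2-1$, so matching degrees forces $k=1$; for $k\geq 2$ the composition must be exact, i.e.\ zero in homology, as predicted by Definition~\ref{defn.frob1}. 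For $k=1$ a direct Fubini computation with the bump-form representatives yields $\pm\phi_{m_1+m_2,n_1+n_2}$, and chasing the Koszul signs from reordering the tensor factors $\mathbf m_1,\mathbf m_2,\mathbf n_1,\mathbf n_2$ into canonical order, together with the sign contribution from the dual side, gives the factor $(-1)^{m_2}$ required by Definition~\ref{defn.frob1}. The main obstacle is precisely this sign bookkeeping: the geometric content (nonvanishing only for $k=1$) is transparent, but the asymmetry between the input and output sides that produces the sign representation on $\SS_m$ and the composition sign $(-1)^{m_2}$ requires a careful unwinding of the conventions relating $\C_\bullet(\RR^m)$, its shift, and its dual.
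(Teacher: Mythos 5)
Your proposal is correct and follows essentially the same route as the paper's proof: dimensions from Proposition~\ref{homologyofqloc}, the $\SS$-bimodule structure read off from the sign of coordinate permutations on the transverse top form corrected by the degree shift on the output factors, vanishing of the $k\geq 2$ compositions by a degree count, and non-vanishing of the $k=1$ composition by an explicit convolution of bump forms with integral $1$. (Two small remarks: the shift does not ``double'' the sign on input permutations --- it simply leaves the input factors untouched, so $\SS_m$ retains the sign representation while $[1]^{\otimes n}$ trivializes it on the outputs; and the paper itself stops at showing the $k=1$ composition is non-zero, treating the precise coefficient $(-1)^{m_2}$ as fixed by the choice of basis conventions rather than carrying out the full sign chase you defer.)
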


This allows us to introduce the following notion, which will be useful in Section~\ref{section.mainthms}:
\begin{definition}\label{defn.Thom}
  A \define{Thom form} is a cycle in $\qloc$ representing in homology a basis vector of $\Frob_{1}$, for the basis picked out in Definition~\ref{defn.frob1}.
\end{definition}

The justification for the name ``Thom form'' comes from the proofs of Proposition~\ref{homologyofqloc} and Lemma~\ref{lemma.hqlocfrob}: the Thom forms are precisely the forms implementing a Thom isomorphism for the embedding $\diag: \RR \mono \RR^{m+n}$.  There is a sign ambiguity coming from choices of how to work with forms on a product of manifolds and how to treat basis elements under homological-degree reindexing; Definition~\ref{defn.Thom} makes a particular choice for these signs.

\begin{proof}[Proof of Lemma~\ref{lemma.hqlocfrob}]
  The homology of any dg (di/pr)operad is itself a dg (di/pr)operad (but with zero differential).  Proposition~\ref{homologyofqloc} identifies the dimensions of $\H_{\bullet}(\qloc)$ with those of $\Frob_{1}$.  To check that these agree as $\SS$-bimodules requires understanding with what signs a cochain $f$ supported in $B_{\ell}(\diag(\RR))\subseteq \RR^{m+n}$ representing the non-zero homology class in $\qloc(m,n)$ transforms, when thought of as a map $\C_{\bullet}(\RR^{m}) \to \C_{\bullet}(\RR^{n})$.  As in the proof of Proposition~\ref{homologyofqloc}, it is perhaps easiest to think in the familiar de Rham model.  Then $f$ is  represented by a de Rham form of form degree $m+n-1$, which is homologous to the tensor product of a top form on $\RR^{m+n-1} = \diag(\RR)^{\perp}$ with total integral $1$ and the constant function $1$ on $\diag(\RR)$.  The top form transforms, modulo exact forms, in the sign representation of $\SS_{m} \times \SS_{n}$ acting by permuting the coordinates on $\RR^{m+n}$.  However, we care about the permutation action not on $(\C_{\bullet}(\RR^{m}))^{\vee}\otimes \C^{-\bullet}(\RR^{n}) = \bigl((\C_{\bullet}(\RR))^{\vee}\bigr)^{\otimes m}\otimes \bigl(\C^{-\bullet}(\RR)\bigr)^{\otimes n}$, but rather on 
  $$ (\C_{\bullet}(\RR^{m}))^{\vee}\otimes \C_{\bullet}(\RR^{n}) = \bigl((\C_{\bullet}(\RR))^{\vee}\bigr)^{\otimes m}\otimes \bigl(\C_{\bullet}(\RR)\bigr)^{\otimes n} \subseteq \bigl((\C_{\bullet}(\RR))^{\vee}\bigr)^{\otimes m}\otimes \bigl(\C^{-\bullet}(\RR)[1]\bigr)^{\otimes n} $$
  The extra factor of $[1]^{\otimes n}$ switches the sign representation of $\SS_{n}$ into the trivial representation.  This verifies the isomorphism $\H_{\bullet}(\qloc) \cong\Frob_{1}$ as $\SS$-bimodules.
  
  It remains to verify that $\H_{\bullet}(\qloc)$ and $\Frob_{1}$ have the same (di/pr)operadic compositions.  Binary properadic composition along graphs with $k\geq 2$ internal edges vanishes in $\H_{\bullet}(\qloc)$ for homological degree reasons: there are no non-zero degree-zero maps $[-m_{1}+1]\otimes[-m_{2}+1] \to [-(m_{1}+m_{2}-k)+1]$ unless $k=1$.  It therefore suffices to show that all dioperadic compositions in $\H_{\bullet}(\qloc)$ are non-zero.  The exact coefficients will depend on the choice of basis for $\H_{\bullet}(\qloc)$.  Up to sign, this basis can be chosen by declaring that the basis vector is the fundamental class of $\diag(\RR)$, but fixing the signs requires deciding in more detail the sign conventions for how this class should behave under homological degree reindexing.
  
  Consider then a dioperadic composition in $\H_{\bullet}(\qloc)$ of the form 
    $$
  \begin{tikzpicture}[baseline=(basepoint),yscale=1.25]
  \path(0,.25) coordinate (basepoint);
  \draw (-.5,.5) node[circle,draw,inner sep=1] (delta1) {$f_{2}$};
  \draw (.5,0) node[circle,draw,inner sep=1] (delta2) {$f_{1}$};
  \draw[onearrow]  (-.25,-.5) .. controls +(0,.25) and +(.25,-.5) .. (delta1);
  \draw[onearrow] (-.75,-.5) .. controls +(0,.25) and +(-.25,-.25) .. (delta1);
  \draw[onearrow]  (.25,-.5) .. controls +(0,.25) and +(-.25,-.25) .. (delta2);
  \draw[onearrow] (.75,-.5) .. controls +(0,.25) and +(.25,-.25) .. (delta2);
  \draw[onearrow] (delta1) .. controls +(-.25,.25) and +(0,-.25) .. (-.85,1.1);
  \draw[onearrow] (delta1) .. controls +(.25,.25) and +(0,-.25) .. (-.15,1.1);
  \draw[onearrow] (delta2) .. controls +(-.25,.5) and +(0,-.25) .. (.15,1.1);
  \draw[onearrow] (delta2) .. controls +(.25,.25) and +(0,-.25) .. (.85,1.1);
  \draw[] (-.45,1) node {$\scriptstyle \dots$} (.55,1) node {$\scriptstyle \dots$};
  \draw[] (-.45,-.35) node {$\scriptstyle \dots$} (.55,-.35) node {$\scriptstyle \dots$};
  \draw[onearrow] (delta2) -- (delta1);
    \draw[decorate,decoration=brace] (-.9,1.15) -- node[auto] {$\scriptstyle  n_{2}$} (-.1,1.15);
    \draw[decorate,decoration=brace] (.1,1.15) -- node[auto] {$\scriptstyle  n_{1}$} (.9,1.15);
    \draw[decorate,decoration=brace] (-.2,-.55) -- node[auto] {$\scriptstyle  m_{2}$} (-.8,-.55);
    \draw[decorate,decoration=brace] (.8,-.55) -- node[auto] {$\scriptstyle  m_{1}$} (.2,-.55);
\end{tikzpicture}
    $$
  where $f_{1}$ and $f_{2}$ are cycles in $\qloc$ representing non-zero homology classes.  We will continue to think in terms of the de Rham model.  Choose a compactly supported smooth 1-form $\varphi$ on $\RR$ such that $\int_{\RR}\varphi = 1$.  Denote the standard coordinates on $\RR^{m_{1}+n_{1}+1}$ by $x_{1},\dots,x_{m_{1}+n_{1}+1}$ and the standard coordinates on $\RR^{m_{2}+n_{2}+1}$ by $y_{1},\dots,y_{m_{2}+n_{2}+1}$, and suppose that the dioperadic composition pairs the last coordinate $x_{m_{1}+n_{1}+1}$ with the first coordinate $y_{1}$.  Up to a nonzero scalar factor and the addition of exact forms, we may suppose:
  \begin{gather*}
    f_{1}(x_{1},\dots,x_{m_{1}+n_{1}+1}) = \varphi(x_{2}-x_{1})\varphi(x_{3}-x_{2})\dots\varphi(x_{m_{1}+n_{1}+1} - x_{m_{1}+n_{1}})\\
       f_{2}(y_{1},\dots,y_{m_{2}+n_{2}+1}) = \varphi(y_{2}-y_{1})\varphi(y_{3}-y_{2})\dots\varphi(y_{m_{1}+n_{1}+1} - y_{m_{1}+n_{1}})
  \end{gather*}
  The composition is computed (up to sign conventions) via the following integral: set $x_{m_{1}+n_{1}+1} = y_{1} = z$, and integrate out this variable.  Thus the dioperadic composition is
  $$
     f_{1}\Circ f_{2} = \varphi(x_{2}-x_{1}) \dots \varphi(x_{m_{1}+n_{1}} - x_{m_{1}+n_{1}-1}) \psi(y_{2} - x_{m_{1}+n_{1}})\varphi(y_{3}-y_{2})\dots\varphi(y_{m_{1}+n_{1}+1} - y_{m_{1}+n_{1}}),
   $$
   where $\psi(y-x) = \int_{z \in \RR} \varphi(y-z)\varphi(z-x)$.  Note that indeed $\psi(y-x)$  depends only on the difference $y-x$, and not on their individual values, and $\psi$ is a compactly supported 1-form.  But one may also easily compute $\int_{\RR}\psi = \int_{t\in \RR}\psi(t) = \int_{t\in \RR}\int_{z\in \RR} \varphi(t-z)\varphi(z) = \int_{z\in \RR}\left(\int_{t\in \RR} \varphi(t-z)\right)\varphi(z) = \int_{z\in \RR}\varphi(z) = 1$.  It follows that $f_{1}\Circ f_{2}$ represents a nonzero homology class in $\qloc(m_{1}+m_{2},n_{1}+n_{2})$.
\end{proof}

The universal enveloping properad of the dioperad $\Frob_{1}$ is the properad $\Frob_{1}$: the fact that composition vanishes whenever $k = |\mathbf k| > 0$ follows from the $\SS$-actions.  Thus the notion of ``$\Frob_{1}$-algebra'' is unambiguous.  However, because the universal enveloping properad functor is not exact, the notion of ``homotopy $\Frob_{1}$-algebra'' is ambiguous.  Namely, let $\h^{\di}\Frob_{1}$ denote a cofibrant replacement of $\Frob_{1}$ in the category of dioperads, and $\h^{\pr}\Frob_{1}$ a cofibrant replacement in properads.  Since forgetting is not exact, the underlying dioperad of $\h^{\pr}\Frob_{1}$ may not be cofibrant, although it does fiber acyclicly over $\Frob_{1}$; the universal enveloping properad of $\h^{\di}\Frob_{1}$ will be cofibrant, but may not fiber acyclicly over $\Frob_{1}$.  There is a canonical contractible space of homomorphisms $\h^{\di}\Frob_{1} \to \h^{\pr}\Frob_{1}$, through which any $\h^{\pr}\Frob_{1}$-algebra forgets to an $\h^{\di}\Frob_{1}$-algebra.  As we will see in Section~\ref{section.mainthms} from trying to represent each on $\C_{\bullet}(\RR)$, this forgetting really is a loss of information: not all actions of $\h^{\di}\Frob_{1}$ extend to actions of $\h^{\pr}\Frob_{1}$.

\section{Koszulity of $\Frob_{1}$} \label{section.Koszul}

As far as this paper is concerned, the \emph{raison d'\^etre} of Koszul duality theory is to provide small cofibrant replacements of objects of interest.
We will briefly recall enough of the theory for our purposes.

  For any $\SS$-bimodule $T$, we denote by $\Ff(T)$ the free (di/pr)operad generated by $T$.  (This is not much of an abuse of notation, as the universal enveloping properad of the free dioperad generated by $T$ is the free properad generated by $T$.)  Note that $\Ff(T)$ is also a co(di/pr)operad.  We let $\Ff^{(k)}(T)$ denote the sub-$\SS$-bimodule of $\Ff(T)$ that transforms with weight $k$ under the canonical $\QQ^{\times}$-action on $T$. In particular, $\Ff^{(1)}(T) = T$, and $\Ff(T) = \bigoplus_{k\geq 1}\Ff^{(k)}(T)$.  
    
\begin{definition} \label{defn.quaddual}
  A \define{quadratic} (di/pr)operad is a (di/pr)operad presented as $P = \Ff(T) / \<R\>$, where $T$ is an $\SS$-bimodule and $R \subseteq \Ff^{(2)}(T)$ is a sub-$\SS$-bimodule generating the ideal $\<R\>$.
  
  The \define{quadratic dual} $P^{\shriek}$ of a quadratic (di/pr)operad $P$ is the maximal graded sub-co(di/pr)operad of $\Ff(T[1])$ whose intersection with $\Ff^{(2)}(T[1])$ is precisely $R[2]\subseteq \Ff^{(2)}(T[1])$.
\end{definition}

\begin{definition}
  Let $Q$ be any co(di/pr)operad, such that for each $m_{1},m_{2},n_{1},n_{2}$, there are only finitely many $k\in \NN$ for which the decomposition map $ Q\bigl( m_{1} \sqcup  m_{2},  n_{1} \sqcup  n_{2} \bigr) \to Q\bigl( m_{1}, k \sqcup  n_{1} \bigr) \otimes Q\bigl( m_{2} \sqcup  k, n_{2}\bigr) $ is nonzero.
  The \define{cobar construction} applied to $Q$ produces the (di/pr)operad $\B Q = \Ff(Q[-1])$, with the differential extending the degree-$(-1)$ map $\sum (\text{decompositions}) : Q[-1] \to \Ff^{(2)}(Q[-1])$.  Coassiciativity is equivalent to the differential squaring to $0$.
\end{definition}

One of the many uses of the cobar construction is to organize the inductive computation of quadratic duals:

\begin{lemma} \label{lemma.cobarinductive}
  Let $P = \Ff(T)/\<R\>$ be a quadratic (di/pr)operad and $P^{\shriek}$ its quadratic dual.  The canonical $\QQ^{\times}$ action on $T$ extends to a grading by positive integers of both $P$ and $P^{\shriek}$.  For $x\in \Ff^{(k)}(T[1])$, let $\tilde x \in \Ff^{(k)}(T[1])[-1]$ denote the corresponding generator of $\B\Ff(T[1])$, and $\partial(\tilde x)$ its differential therein.  Suppose that $k\geq 3$.  Then $x\in P^{\shriek}$ if and only if  $\partial(\tilde x) \in \B P^{\shriek} \subseteq \B\Ff(T[1])$.
\end{lemma}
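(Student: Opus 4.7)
The plan is to identify the cobar differential with the decomposition operation on the cofree co(di/pr)operad $\Ff(T[1])$. By construction of $\B\Ff(T[1])$, for any $y \in \Ff(T[1])$ the element $\partial(\tilde y)$ is (up to signs) the sum over all binary decompositions of $y$, each decomposition being recorded in $\Ff^{(2)}(\Ff(T[1])[-1])$ as a two-vertex tree with the two factors sitting on its vertices. Different decompositions of $y$ are indexed by different arity data $(m_1, m_2, n_1, n_2, k)$ and so occupy linearly independent pieces of $\B\Ff(T[1])$. Therefore the statement $\partial(\tilde x) \in \B P^{\shriek} = \Ff(P^{\shriek}[-1])$ unpacks to the single claim that every binary decomposition of $x$ has both factors already in $P^{\shriek}$.

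With this reformulation in hand, the forward direction is immediate: if $x \in P^{\shriek}$, then because $P^{\shriek}$ is by hypothesis a sub-co(di/pr)operad of $\Ff(T[1])$, it is closed under decomposition, so every decomposition of $x$ automatically lies in $P^{\shriek} \otimes P^{\shriek}$.

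For the converse I would invoke the maximality clause of Definition~\ref{defn.quaddual}. Let $M \subseteq \Ff(T[1])$ be the sub-$\SS$-bimodule generated by $P^{\shriek}$ together with $x$. Using the hypothesis on $\partial(\tilde x)$ together with the $\SS$-equivariance of the decomposition maps, every decomposition of every element of $M$ lands in $P^{\shriek} \otimes P^{\shriek} \subseteq M \otimes M$: for elements of $P^{\shriek}$ this holds because $P^{\shriek}$ is already a sub-coproperad, and for $\SS$-translates of $x$ this follows by combining the assumption with equivariance. So $M$ is itself a sub-co(di/pr)operad of $\Ff(T[1])$. Since weights are additive under decomposition and $x$ has weight $k \geq 3$, the $\SS$-orbit of $x$ sits entirely in weights $\geq 3$, so $M$ and $P^{\shriek}$ share the same weight-2 part, namely $R[2]$. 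Maximality of $P^{\shriek}$ then forces $M \subseteq P^{\shriek}$, and in particular $x \in P^{\shriek}$.

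The one place the hypothesis $k \geq 3$ is actually used is this weight-2 comparison, which is what prevents adjoining $x$ to $P^{\shriek}$ from enlarging the weight-2 part beyond $R[2]$; for $k = 2$ one would genuinely risk changing the quadratic relations, and the lemma would fail. I do not anticipate a serious obstacle anywhere else: once one accepts the tautological identification of the cobar differential with the sum of decomposition maps, the remainder is just the universal property of maximality combined with the additivity of the weight grading.
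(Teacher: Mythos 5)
Your proof is correct and follows essentially the same route as the paper's: identify the summands of the cobar differential with the binary decompositions of $x$ (which appear without cancellation in distinct components of $\Ff^{(2)}(\Ff(T[1])[-1])$), and then characterize membership in $P^{\shriek}$ by whether all such decompositions land in $\Ff^{(2)}(P^{\shriek})$. Your explicit maximality argument with the auxiliary sub-co(di/pr)operad $M$ is just a fuller justification of the step the paper attributes directly to Definition~\ref{defn.quaddual}, and you correctly isolate the role of $k\geq 3$ in keeping the weight-$2$ part equal to $R[2]$.
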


\begin{proof}
As in more familiar cases, a co(di/pr)operad $Q$ is entirely determined by its cobar construction $\B Q$ along with the inclusion $Q[-1] \mono \B Q$ of (non-dg) $\SS$-bimodules~\cite{MR2320654}.  In particular, the different binary decompositions of an element $x\in Q$ correspond (without cancelations) to the summands appearing in $\partial(\tilde x)$, where $\tilde x \in Q[-1]$ is the corresponding element of the shifted $\SS$-bimodule.

Suppose that $x\in \Ff^{(k)}(T[1])$ with $k\geq 3$.  Definition~\ref{defn.quaddual} then says that $x\in P^{\shriek}$ if and only if all binary decompositions of $x$ are elements of $\Ff^{(2)}(P^{\shriek})$.  (The binary decompositions of $x$ consist of elements of individual weights, for the $\QQ^{\times}$ action on $T$, each less than $k$, and so whether or not they are in $P^{\shriek}$ has been determined by induction; in weights $1$ and $2$ one has $T[1]$ and $R[2]$ respectively.%  When working with properads, $\Ff^{(2)}$ is defined as a sum over coinvariant spaces for various symmetric group actions, whereas binary decompositions really live in the corresponding invariant spaces; we can identify these as we are working in characteristic $0$.
)   But the general yoga of the cobar construction says that this happens if and only if $\partial(\tilde x) \in \B(P^{\shriek})$.
\end{proof}

The (di/pr)operad $\B Q$ is an example of a \define{quasifree} (di/pr)operad, which more generally is any dg (di/pr)operad which would be free if one were to forget its differential.

\begin{lemma}\label{lemma.kos}
  Let $P$ be a  quadratic (di/pr)operad, with generators $T$ and relations $R$. Then $\B P^{\shriek}$ is cofibrant and fibers over $P$.
\end{lemma}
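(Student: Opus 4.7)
The plan is to establish the two claims in turn. For cofibrancy, $\B P^{\shriek} = \Ff(P^{\shriek}[-1])$ is by construction quasifree: its underlying graded (di/pr)operad is free on the $\SS$-bimodule $P^{\shriek}[-1]$, and only the differential is nontrivial. In the model structure recalled in Definition~\ref{defn.modelcatstr}, quasifree (di/pr)operads are cofibrant --- this is a standard fact recorded in~\cite{MR2572248}. So the first clause requires no further work.

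For the fibration $\pi : \B P^{\shriek} \to P$, I would use the universal property of the free (di/pr)operad: a morphism out of $\Ff(P^{\shriek}[-1])$ of graded (di/pr)operads is the same data as an $\SS$-bimodule map $\pi_{0} : P^{\shriek}[-1] \to P$. I take $\pi_{0}$ to be the canonical inclusion $T \cong T[1][-1] \mono P^{\shriek}[-1]$ followed by $T \mono \Ff(T) \epi P$ on the weight-$1$ summand, and zero on all summands of weight $\geq 2$.

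The essential verification is that $\pi$ commutes with the differential. Since both $\pi \circ d_{\B P^{\shriek}}$ and $d_{P}\circ \pi = 0$ are derivations on the free (di/pr)operad $\B P^{\shriek}$, it suffices to check agreement on generators $\tilde x \in P^{\shriek}[-1]$. On a weight-$k$ generator $\tilde x$, the cobar differential $d_{\B P^{\shriek}}(\tilde x)$ is a sum of binary decompositions of $x$ in $\Ff^{(2)}(P^{\shriek}[-1])$, with summands of bi-weight $(i,k-i)$, $i,k-i \geq 1$. For $k=2$, the element $x$ lies in $R[2]$ by Definition~\ref{defn.quaddual}, and its decomposition is exactly (up to the canonical shift) the corresponding element of $R \subseteq \Ff^{(2)}(T)$; this gets sent by $\pi$ to $0 \in P$ because $R$ generates the defining ideal. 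For $k \geq 3$, every summand in $d_{\B P^{\shriek}}(\tilde x)$ has at least one tensor factor of weight $\geq 2$, on which $\pi_{0}$ vanishes, so $\pi$ again sends the entire expression to $0$. Hence $\pi$ is a chain map.

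Finally, $\pi$ is surjective because its image is a sub-(di/pr)operad of $P$ containing the generators $T$, hence equal to all of $P$. In the model structure, surjections are the fibrations, so $\pi$ is the required fibration. The main potential subtlety is bookkeeping for the signs coming from the $[\pm 1]$ shifts and checking that the inductive weight-grading argument is consistent with the identification of $P^{\shriek}$ provided by Lemma~\ref{lemma.cobarinductive}; but these are routine once one fixes sign conventions as in~\cite{MR2572248}.
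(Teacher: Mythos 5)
This is correct and essentially identical to the paper's argument: cofibrancy is quoted from \cite[Corollary~40]{MR2572248}, the map is determined by sending $T[1][-1]$ identically to $T$ and all higher-weight generators to $0$, and compatibility with the differentials reduces to the weight-$2$ generators (a copy of $R[1]$) mapping into the ideal $\<R\>$, with the weight-$\geq 3$ case handled automatically because every summand of the cobar differential then contains a factor killed by $\pi_{0}$ (the paper leaves this last point implicit, and likewise omits the easy surjectivity check you include). The one caveat worth noting is that ``quasifree implies cofibrant'' is not literally true without a condition on the differential (it must strictly decrease some exhaustive filtration on the generators); for $\B P^{\shriek}$ this holds because the cobar differential strictly lowers weight, which is precisely the situation covered by the cited corollary.
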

\begin{proof}
  Cofibrancy follows from~\cite[Corollary~40]{MR2572248}.  To define the map $\B P^{\shriek} \to P$, it suffices to define the action on generators $P^{\shriek}[-1] \subseteq \Ff(T[1])[-1]$.  We declare that $T[1][-1] \subseteq P^{\shriek}[-1]$ maps by the identity to $T \subseteq P$, and that all other generators map to $0$.  To check that this is well-defined, it suffices to check that the derivatives of the generators in $\Ff^{(2)}(T[1])[-1]$ get mapped to $0$.  But these generators are precisely a copy of $R[1]$, and differentiating and mapping gives the (trivial) image of $R$ in $P = \Ff(T)/\<R\>$.
\end{proof}

\begin{definition}
  A quadratic (di/pr)operad is \define{Koszul} if the canonical fibration $\B P^{\shriek} \to P$ from Lemma~\ref{lemma.kos} is acyclic, in which case $\B P^{\shriek}$ is a cofibrant replacement of $P$. 
  When $P$ is Koszul, we let $\sh P = \B P^{\shriek}$,  and call $\sh P$-algebras \define{strong homotopy $P$-algebras}.
\end{definition}

For any (di/pr)operad $P$ satisfying some mild finite-dimensionality assumptions, the (di/pr)operad $\B((\B P^{*})^{*})$ is always a cofibrant replacement of $P$.  (The second dual should be taken relative to the grading induced by the $\QQ^{\times}$ action on the $\SS$-bimodule $P$.)  The point is that $\B P^{\shriek}$ is generally much smaller than $\B((\B P^{*})^{*})$, and hence more manageable.

The main result of this section says that $\Frob_{1}$ is Koszul, as both a dioperad and as a properad:

\begin{proposition}
  The (di/pr)operad $\Frob_{1}$ of open and coopen commutative Frobenius algebras has the following quadratic presentation, with respect to which it is Koszul.  The generating $\SS$-bimodule $T$ is spanned by:
    $$\underbrace{
  \,\tikz[baseline=(basepoint)]{ 
    \path (0,0) coordinate (basepoint) (0,4pt) node[dot] {};
    \draw[onearrow](0,-6pt) -- (0,4pt);
    \draw[onearrow](0,4pt) -- (-8pt,14pt);
    \draw[onearrow](0,4pt) -- (8pt,14pt);
  }\,
  = 
  \,\tikz[baseline=(basepoint)]{ 
    \path (0,0) coordinate (basepoint) (0,4pt) node[dot] {};
    \draw[onearrow](0,-6pt) -- (0,4pt);
    \draw[](0,4pt) .. controls +(8pt,4pt) and +(8pt,-4pt) .. (-8pt,14pt);
    \draw[](0,4pt) .. controls +(-8pt,4pt) and +(-8pt,-4pt) .. (8pt,14pt);
  }\,
  ,}_{\text{homological degree $0$}} \quad 
  \underbrace{\,\tikz[baseline=(basepoint)]{ 
    \path (0,0) coordinate (basepoint) (0,4pt) node[dot] {};
    \draw[onearrow](-8pt,-6pt) -- (0,4pt);
    \draw[onearrow](8pt,-6pt) -- (0,4pt);
    \draw[onearrow](0,4pt) -- (0,14pt);
  }\,
  = -
  \,\tikz[baseline=(basepoint)]{ 
    \path (0,0) coordinate (basepoint) (0,4pt) node[dot] {};
    \draw[] (-8pt,-6pt) .. controls +(8pt,4pt) and +(8pt,-4pt) .. (0,4pt);
    \draw[] (8pt,-6pt) .. controls +(-8pt,4pt) and +(-8pt,-4pt) .. (0,4pt);
    \draw[onearrow](0,4pt) -- (0,14pt);
  }\,}_{\text{homological degree $-1$}}
  $$
  A basis for the relations $R$ is:
  \begin{gather*}
  \,\tikz[baseline=(basepoint)]{ 
    \path (0,3pt) coordinate (basepoint) (0,1pt) node[dot] {} (-8pt,11pt) node[dot] {};
    \draw[](0,-9pt) -- (0,1pt);
    \draw[](0,1pt) -- (-8pt,11pt);
    \draw[](-8pt,11pt) -- (-12pt,21pt);
    \draw[](-8pt,11pt) -- (0pt,21pt);
    \draw[](0,1pt) -- (12pt,21pt);
  }\,
  -
  \,\tikz[baseline=(basepoint)]{ 
    \path (0,3pt) coordinate (basepoint) (0,1pt) node[dot] {} (8pt,11pt) node[dot] {};
    \draw[](0,-9pt) -- (0,1pt);
    \draw[](0,1pt) -- (8pt,11pt);
    \draw[](0,1pt) -- (-12pt,21pt);
    \draw[](8pt,11pt) -- (0pt,21pt);
    \draw[](8pt,11pt) -- (12pt,21pt);
  }\,
  ,\; 
  \,\tikz[baseline=(basepoint)]{ 
    \path (0,3pt) coordinate (basepoint) (0,1pt) node[dot] {} (-8pt,11pt) node[dot] {};
    \draw[](0,-9pt) -- (0,1pt);
    \draw[](0,1pt) -- (-8pt,11pt);
    \draw[](-8pt,11pt) -- (-12pt,21pt);
    \draw[](-8pt,11pt) -- (0pt,21pt);
    \draw[](0,1pt) -- (12pt,21pt);
  }\,
  -
  \,\tikz[baseline=(basepoint)]{ 
    \path (0,3pt) coordinate (basepoint) (0,1pt) node[dot] {} (8pt,11pt) node[dot] {};
    \draw[](0,-9pt) -- (0,1pt);
    \draw[](0,1pt) -- (8pt,11pt);
    \draw[](0,1pt) -- (0pt,21pt);
    \draw[](8pt,11pt) -- (-12pt,21pt);
    \draw[](8pt,11pt) -- (12pt,21pt);
  }\,
  ,\;
  \,\tikz[baseline=(basepoint)]{ 
    \path (0,3pt) coordinate (basepoint) (8pt,1pt) node[dot] {} (0,11pt) node[dot] {};
    \draw[](-12pt,-9pt) -- (0,11pt);
    \draw[](0pt,-9pt) -- (8pt,1pt);
    \draw[](12pt,-9pt) -- (8pt,1pt);
    \draw[](8pt,1pt) -- (0,11pt);
    \draw[](0,11pt) -- (0,21pt);
  }\,
  +
  \,\tikz[baseline=(basepoint)]{ 
    \path (0,3pt) coordinate (basepoint) (-8pt,1pt) node[dot] {} (0,11pt) node[dot] {};
    \draw[](12pt,-9pt) -- (0,11pt);
    \draw[](-12pt,-9pt) -- (-8pt,1pt);
    \draw[](0pt,-9pt) -- (-8pt,1pt);
    \draw[](-8pt,1pt) -- (0,11pt);
    \draw[](0,11pt) -- (0,21pt);
  }\,
  ,\; 
  \,\tikz[baseline=(basepoint)]{ 
    \path (0,3pt) coordinate (basepoint) (8pt,1pt) node[dot] {} (0,11pt) node[dot] {};
    \draw[](-12pt,-9pt) -- (0,11pt);
    \draw[](0pt,-9pt) -- (8pt,1pt);
    \draw[](12pt,-9pt) -- (8pt,1pt);
    \draw[](8pt,1pt) -- (0,11pt);
    \draw[](0,11pt) -- (0,21pt);
  }\,
  +
  \,\tikz[baseline=(basepoint)]{ 
    \path (0,3pt) coordinate (basepoint) (8pt,1pt) node[dot] {} (0,11pt) node[dot] {};
    \draw[](0pt,-9pt) -- (0,11pt);
    \draw[](12pt,-9pt) -- (8pt,1pt);
    \draw[](-12pt,-9pt) -- (8pt,1pt);
    \draw[](8pt,1pt) -- (0,11pt);
    \draw[](0,11pt) -- (0,21pt);
  }\,
  ,\; 
  \\
  \,\tikz[baseline=(basepoint)]{ 
    \path (0,3pt) coordinate (basepoint) (0,1pt) node[dot] {} (0,11pt) node[dot] {};
    \draw[](-8pt,-9pt) -- (0,1pt);
    \draw[](8pt,-9pt) -- (0,1pt);
    \draw[](0,1pt) -- (0,11pt);
    \draw[](0,11pt) -- (-8pt,21pt);
    \draw[](0,11pt) -- (8pt,21pt);
  }\,
  -
  \,\tikz[baseline=(basepoint)]{ 
    \path (0,3pt) coordinate (basepoint) (8pt,1pt) node[dot] {} (-8pt,11pt) node[dot] {};
    \draw[](-8pt,-9pt) -- (-8pt,11pt);
    \draw[](8pt,-9pt) -- (8pt,1pt);
    \draw[](8pt,1pt) -- (-8pt,11pt);
    \draw[](-8pt,11pt) -- (-8pt,21pt);
    \draw[](8pt,1pt) -- (8pt,21pt);
  }\,
  ,\;
  \,\tikz[baseline=(basepoint)]{ 
    \path (0,3pt) coordinate (basepoint) (0,1pt) node[dot] {} (0,11pt) node[dot] {};
    \draw[](-8pt,-9pt) -- (0,1pt);
    \draw[](8pt,-9pt) -- (0,1pt);
    \draw[](0,1pt) -- (0,11pt);
    \draw[](0,11pt) -- (-8pt,21pt);
    \draw[](0,11pt) -- (8pt,21pt);
  }\,
  -
  \,\tikz[baseline=(basepoint)]{ 
    \path (0,3pt) coordinate (basepoint) (-8pt,1pt) node[dot] {} (8pt,11pt) node[dot] {};
    \draw[](-8pt,-9pt) -- (-8pt,1pt);
    \draw[](8pt,-9pt) -- (8pt,11pt);
    \draw[](-8pt,1pt) -- (8pt,11pt);
    \draw[](-8pt,1pt) -- (-8pt,21pt);
    \draw[](8pt,11pt) -- (8pt,21pt);
  }\,
  ,\;
  \,\tikz[baseline=(basepoint)]{ 
    \path (0,3pt) coordinate (basepoint) (0,1pt) node[dot] {} (0,11pt) node[dot] {};
    \draw[](-8pt,-9pt) -- (0,1pt);
    \draw[](8pt,-9pt) -- (0,1pt);
    \draw[](0,1pt) -- (0,11pt);
    \draw[](0,11pt) -- (-8pt,21pt);
    \draw[](0,11pt) -- (8pt,21pt);
  }\,
  -
  \,\tikz[baseline=(basepoint)]{ 
    \path (0,3pt) coordinate (basepoint) (0,1pt) node[dot] {} (0,11pt) node[dot] {};
    \draw[](0,-9pt) -- (0,1pt);
    \draw[](0,1pt) -- (0,11pt);
    \draw[](0,11pt) -- (0,21pt);
    \draw[](-16pt,-9pt) -- (0,11pt);
    \draw[](0pt,1pt) -- (-16pt,21pt);
  }\,
  ,\;
  \,\tikz[baseline=(basepoint)]{ 
    \path (0,3pt) coordinate (basepoint) (0,1pt) node[dot] {} (0,11pt) node[dot] {};
    \draw[](-8pt,-9pt) -- (0,1pt);
    \draw[](8pt,-9pt) -- (0,1pt);
    \draw[](0,1pt) -- (0,11pt);
    \draw[](0,11pt) -- (-8pt,21pt);
    \draw[](0,11pt) -- (8pt,21pt);
  }\,
  -
  \,\tikz[baseline=(basepoint)]{ 
    \path (0,3pt) coordinate (basepoint) (0,1pt) node[dot] {} (0,11pt) node[dot] {};
    \draw[](0,-9pt) -- (0,1pt);
    \draw[](0,1pt) -- (0,11pt);
    \draw[](0,11pt) -- (0,21pt);
    \draw[](16pt,-9pt) -- (0,11pt);
    \draw[](0pt,1pt) -- (16pt,21pt);
  }\,
  . \end{gather*}
\end{proposition}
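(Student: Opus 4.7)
The statement has two parts: first, that the listed generators $T$ and relations $R$ present $\Frob_1$; second, that this presentation is Koszul in the (di/pr)operadic sense. I would handle the presentation by a direct dimension count and Koszulity by a small-model / distributive-law argument combined with Lemma~\ref{lemma.cobarinductive}.

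\textbf{Verifying the presentation.} Let $\tilde\Frob_1 = \Ff(T)/\<R\>$. The map $\tilde\Frob_1 \to \Frob_1$ sending each generator to the basis element with the corresponding bi-arity is well-defined (the listed $R$ lie in its kernel by inspection of Definition~\ref{defn.frob1} and the sign calculations given there). To see it is an isomorphism I would argue by induction on total arity that the underlying $\SS$-bimodule of $\tilde\Frob_1(m,n)$ is at most $1$-dimensional in the claimed degree, concentrated in the representation described in Definition~\ref{defn.frob1}. The associativity relation reduces any tree of $\mult$'s to the single $m$-to-$1$ operation; dually for $\comult$'s; the four Frobenius relations allow one to slide any $\mult$ past any adjacent $\comult$ inside a (di)operadic composition, which reduces a general tree to an iterated $\comult$ followed by iterated $\mult$. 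The signs listed in Definition~\ref{defn.frob1} are exactly the signs produced by these rewrites.

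\textbf{Computing $\Frob_1^{\shriek}$.} I would compute $\Frob_1^{\shriek}$ weight-by-weight using Lemma~\ref{lemma.cobarinductive}. Weight $1$ is $T[1]$ and weight $2$ is $R[2] \subseteq \Ff^{(2)}(T[1])$ by definition; in higher weights the criterion ``$x \in \Frob_1^{\shriek}$ iff all binary decompositions of $x$ lie in $\Ff^{(2)}(\Frob_1^{\shriek})$'' lets one solve linear equations for which elements in a given weight survive. Because $R$ is ``maximal'' in the sense that it almost spans $\Ff^{(2)}(T)$, the orthogonal complement $R[2]$ is small, and I expect $\Frob_1^{\shriek}(m,n)$ to be concentrated in a single internal degree with explicit dimension determined by standard Frobenius combinatorics.

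\textbf{Proving Koszulity.} Two approaches are natural. The first is a distributive-law argument: the generators $T$ split as the generators of $\mathrm{Com}$ (the $\mult$) and of the shifted $\mathrm{coCom}$ (the $\comult$), each of which controls a Koszul (di/pr)operad separately; the Frobenius relations constitute a distributive law between them, and the listed relations are exactly the compatibilities that make it a distributive law. One then invokes the general theorem that Koszulity is preserved under distributive laws to conclude Koszulity of $\Frob_1$. The second is Vallette's poset / Gröbner method: choose a suitable monomial order on $\Ff(T)$, verify that the leading terms of $R$ form a quadratic Gröbner basis using the diamond lemma on the weight-$3$ graphs enumerated after Definition~\ref{defn.diproperad}, and conclude Koszulity. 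Either approach yields that $\B\Frob_1^{\shriek} \to \Frob_1$ is a quasiisomorphism.

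\textbf{Main obstacle.} The main difficulty is in passing from the dioperadic to the properadic statement. For dioperads only tree-shaped weight-$3$ compositions appear and the diamond / distributive-law checks are straightforward. For properads one must also check compatibility on weight-$3$ graphs with cycles, namely the first of the four graph types in Definition~\ref{defn.diproperad}; this is exactly where new relations (or their absence) could fail Koszulity. I would verify by hand that the listed relations imply the necessary identity on this ``theta'' graph, which then certifies Koszulity at the properad level simultaneously with the dioperad level.
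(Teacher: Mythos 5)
Your proposal is correct and follows essentially the same route as the paper: the presentation is checked directly, and Koszulity is deduced from the known Koszulity of the (sheared) commutative operad and the cocommutative coproperad via a distributive law, which is precisely the ``replacement rule'' criterion of~\cite[Proposition~8.4]{MR2320654} that the paper invokes. The one point worth making explicit in your treatment of the properadic case is that the genus-one ``theta'' composite of $\comult$ followed by $\mult$ vanishes already in the free properad $\Ff(T)$ --- interchanging the two internal edges acts trivially through the outputs of $\comult$ but by the sign through the inputs of $\mult$, so the composite factors through $\operatorname{sgn}\otimes_{\SS_{2}}\operatorname{triv}=0$ --- and it is this vanishing (not a consequence of the listed tree-level relations) that must be adjoined to the second line of $R$ to close up the replacement rule, and that guarantees $\Frob_{1}$ has no positive-genus operations.
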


\begin{proof}
  The signs arise because the degree-$(-1)$ multiplication does not make a $\Frob_{1}$-algebra $V$ into a commutative algebra, but rather makes $V[-1]$ into a commutative algebra.  It is clear that these define the dioperad $\Frob_{1}$ and hence its universal enveloping properad.  The properad $\Frob_{1}$ has no operations with genus, because the composition $
  \,\tikz[baseline=(basepoint)]{
    \path (0,0pt) coordinate (basepoint) (0,1pt) node[dot] {} (0,7pt) node[dot] {};
    \draw (0,-4pt) -- (0,0pt); \draw (0,8pt) -- (0,12pt);
    \draw (0,1pt) .. controls +(4pt,1pt) and +(4pt,-1pt) .. (0,7pt);
    \draw (0,1pt) .. controls +(-4pt,1pt) and +(-4pt,-1pt) .. (0,7pt);
  }\,$ vanishes, as it must transform both trivially and by the sign representation under the $\SS_{2}$-action interchanging the two interior edges.
  
  The suboperad of $\Frob_{1}$ generated by just the multiplication $\mult$ is a \define{shear} of the nonunital commutative operad (meaning its representations on $V$ are representations of $\mathrm{Com}$ on $V[1]$), which is known to be Koszul~\cite[Theorem~8.5.7]{MR2954392}; similarly the comultiplication $\comult$ generates a copy of the Koszul properad defining nonunital cocommutative coalgebras.  The second line of relations, along with the relation $
  \,\tikz[baseline=(basepoint)]{
    \path (0,0pt) coordinate (basepoint) (0,1pt) node[dot] {} (0,7pt) node[dot] {};
    \draw (0,-4pt) -- (0,0pt); \draw (0,8pt) -- (0,12pt);
    \draw (0,1pt) .. controls +(4pt,1pt) and +(4pt,-1pt) .. (0,7pt);
    \draw (0,1pt) .. controls +(-4pt,1pt) and +(-4pt,-1pt) .. (0,7pt);
  }\,=0$, are together a ``replacement rule'' in the sense of~\cite[8.1]{MR2320654}, and hence $\Frob_{1}$ is Koszul by~\cite[Proposition 8.4]{MR2320654}.
\end{proof}

\begin{corollary}\label{cor.shFrob}
  $\Frob_{1}$ has a cofibrant replacement $\sh\Frob_{1}$, which is quasifree with generating $\SS$-bimodule $(\Frob_{1})^{\shriek}[-1]$.  The generating $\SS$-bimodule $T$ of $\Frob_{1}$ has a bigrading by $\bigl(\#\mult,\#\comult\bigr)$, and the relations $R$ are homogeneous for this bigrading, hence this bigrading extends to $(\Frob_{1})^{\shriek}[-1]$, and the differential on $\sh\Frob_{1}$ preserves this bigrading.
  
  If we are working with dioperads, the piece $(\Frob_{1})^{\shriek}[-1](m,n)$ with $m$ inputs and $n$ outputs is homogeneous for this bigrading, with $\#\mult = m-1$ and $\#\comult = n-1$, and is entirely in homological degree $\#\comult - 1 = n-2$.
  
  If we are working with properads, the piece $(\Frob_{1})^{\shriek}[-1](m,n)$ with $m$ inputs and $n$ outputs has a grading by \define{genus} $\beta$ satisfying
  $ \#\mult = \beta + m-1,$ $ \#\comult = \beta + n-1,$ and $ \text{homological degree} = \#\comult - 1 = \beta + n-2.$ 
\end{corollary}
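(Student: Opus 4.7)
The plan is to reduce everything to bookkeeping, since the substance of the corollary is contained in the Koszulity just established. Specifically, by Lemma~\ref{lemma.kos} together with Koszulity, $\sh\Frob_{1} := \B(\Frob_{1})^{\shriek}$ is cofibrant and quasi-isomorphic to $\Frob_{1}$, and by construction of $\B$ its generating $\SS$-bimodule is $(\Frob_{1})^{\shriek}[-1]$.

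For the bigrading, I would declare $\mult \in T$ to have weight $(1,0)$ and $\comult \in T$ to have weight $(0,1)$, and extend additively to compositions in $\Ff(T)$. Direct inspection shows that each relation in the previous proposition is homogeneous: the associativity relations lie in weight $(2,0)$, the coassociativity relations in $(0,2)$, and the Frobenius relations in $(1,1)$. Hence the bigrading descends to $\Frob_{1} = \Ff(T)/\<R\>$. Because $(\Frob_{1})^{\shriek} \subseteq \Ff(T[1])$ is, by Definition~\ref{defn.quaddual}, the maximal sub-co(di/pr)operad whose weight-two part is $R[2]$, and $R[2]$ is bigraded, the bigrading descends to $(\Frob_{1})^{\shriek}$ as well. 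The cobar differential on $\sh\Frob_{1}$, assembled from the decomposition maps of $(\Frob_{1})^{\shriek}$ into $\Ff^{(2)}((\Frob_{1})^{\shriek})$, is then automatically homogeneous for the bigrading.

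For the dimension count, I would do a standard Euler-characteristic computation on decorated graphs. A basis element of $\Ff^{(k)}(T)(m,n)$ corresponds to a connected directed acyclic graph (a tree, in the dioperadic case) with $m$ external inputs, $n$ external outputs, and each vertex decorated by $\mult$ or $\comult$. Letting $E$ denote the number of internal edges and $V = \#\mult + \#\comult$ the number of vertices, matching input and output slots on vertices against external legs and internal edges yields $2\#\mult + \#\comult = m + E$ and $\#\mult + 2\#\comult = n + E$. For a tree, $E = V - 1$, and solving gives $\#\mult = m - 1$, $\#\comult = n - 1$. For a connected graph of genus $\beta$, $E = V - 1 + \beta$, and solving gives $\#\mult = m + \beta - 1$, $\#\comult = n + \beta - 1$.

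Finally, the homological degree: an element of $\Ff^{(k)}(T)$ with bigrading $(a,b)$ has degree $-a$, since $\mult$ has degree $-1$ and $\comult$ has degree $0$. Passing to $\Ff(T[1])$ shifts each of the $k = a+b$ generators up by one, giving degree $b$; the further shift in $(\Frob_{1})^{\shriek}[-1]$ gives $b - 1 = \#\comult - 1$. Substituting the dioperadic or properadic value of $\#\comult$ yields the two stated formulas. I do not anticipate a serious obstacle; the only subtle point is verifying that the bigrading passes through the quadratic-dual construction, but this is immediate once one observes that $(\Frob_{1})^{\shriek}$ is cut out by a bigraded condition inside a bigraded $\SS$-bimodule.
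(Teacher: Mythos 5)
Your proposal is correct and is exactly the argument the paper intends: the paper's proof of this corollary is the single sentence ``The formulas follow from definition-unpacking and elementary combinatorics,'' and your Euler-characteristic count ($2\#\mult+\#\comult = m+E$, $\#\mult+2\#\comult = n+E$, $E = V-1+\beta$) together with the degree bookkeeping through $T[1]$ and the cobar shift is precisely that unpacking. No gaps.
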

\begin{proof}
  The formulas follow from definition-unpacking and elementary combinatorics.
\end{proof}

We conclude this section by describing the generators $(\Frob_{1})^{\shriek}[-1]$ of $\sh\Frob_{1}$ for small $\#\mult + \#\comult$.  We will record the representations of the symmetric group using the usual Young diagrams, placed under a generator of that irrep.  The $\#\mult + \#\comult = 1$ piece of $(\Frob_{1})^{\shriek}[-1]$ is a copy of the generators $T$, and decomposes as:
$$
  \underset{\tikz \draw (0,0) rectangle (4pt,4pt); \,\otimes\, \tikz \draw (0,0) rectangle (4pt,4pt) (4pt,0pt) rectangle (8pt,4pt);}{
  \,\tikz[baseline=(main.base)]{
    \node[draw,rectangle,inner sep=2pt] (main) {\comult};
    \draw (main.south) -- ++(0,-8pt);
    \draw (main.north) ++(-6pt,0) -- ++(0,8pt);
    \draw (main.north) ++(6pt,0) -- ++(0,8pt);
  }\,
  }
  \;\oplus\;
\underset{\tikz \draw (0,0) rectangle (4pt,4pt) (0pt,4pt) rectangle (4pt,8pt); \,\otimes\,\tikz \draw (0,0) rectangle (4pt,4pt);}{
  \,\tikz[baseline=(main.base)]{
    \node[draw,rectangle,inner sep=2pt] (main) {\mult};
    \draw (main.north) -- ++(0,8pt);
    \draw (main.south) ++(-6pt,0) -- ++(0,-8pt);
    \draw (main.south) ++(6pt,0) -- ++(0,-8pt);
  }\,
  }
  $$
The $\#\mult + \#\comult = 2$ piece is a copy of $R$, decomposing as:
$$
\underset{\tikz \draw (0,0) rectangle (4pt,4pt); \,\otimes\,  \tikz \draw (0,0) rectangle (4pt,4pt) (4pt,0pt) rectangle (8pt,4pt) (0pt,-4pt)  rectangle  (4pt,0pt) ;  }{
  \,\tikz[baseline=(main.base)]{
    \node[draw,rectangle,inner sep=2pt] (main) {$
        \,\tikz[baseline=(basepoint)]{ 
    \path (0,3pt) coordinate (basepoint) (0,1pt) node[dot] {} (-8pt,11pt) node[dot] {};
    \draw[](0,-9pt) -- (0,1pt);
    \draw[](0,1pt) -- (-8pt,11pt);
    \draw[](-8pt,11pt) -- (-12pt,21pt);
    \draw[](-8pt,11pt) -- (0pt,21pt);
    \draw[](0,1pt) -- (12pt,21pt);
  }\,
  -
  \,\tikz[baseline=(basepoint)]{ 
    \path (0,3pt) coordinate (basepoint) (0,1pt) node[dot] {} (8pt,11pt) node[dot] {};
    \draw[](0,-9pt) -- (0,1pt);
    \draw[](0,1pt) -- (8pt,11pt);
    \draw[](0,1pt) -- (-12pt,21pt);
    \draw[](8pt,11pt) -- (0pt,21pt);
    \draw[](8pt,11pt) -- (12pt,21pt);
  }\,
    $};
    \draw (main.south) -- ++(0,-8pt);
    \draw (main.north) ++(-12pt,0) -- ++(0,8pt);
    \draw (main.north) ++(0pt,0) -- ++(0,8pt);
    \draw (main.north) ++(12pt,0) -- ++(0,8pt);
  }\,
  }
  \;\oplus\;
\underset{\mathbf 2 \,\otimes\,  \mathbf 2  }{
  \,\tikz[baseline=(main.base)]{
    \node[draw,rectangle,inner sep=2pt] (main) {$
          \,\tikz[baseline=(basepoint)]{ 
    \path (0,3pt) coordinate (basepoint) (0,1pt) node[dot] {} (0,11pt) node[dot] {};
    \draw[](-8pt,-9pt) -- (0,1pt);
    \draw[](8pt,-9pt) -- (0,1pt);
    \draw[](0,1pt) -- (0,11pt);
    \draw[](0,11pt) -- (-8pt,21pt);
    \draw[](0,11pt) -- (8pt,21pt);
  }\,
  -
  \,\tikz[baseline=(basepoint)]{ 
    \path (0,3pt) coordinate (basepoint) (8pt,1pt) node[dot] {} (-8pt,11pt) node[dot] {};
    \draw[](-8pt,-9pt) -- (-8pt,11pt);
    \draw[](8pt,-9pt) -- (8pt,1pt);
    \draw[](8pt,1pt) -- (-8pt,11pt);
    \draw[](-8pt,11pt) -- (-8pt,21pt);
    \draw[](8pt,1pt) -- (8pt,21pt);
  }\,
    $};
    \draw (main.south) ++(-6pt,0) -- ++(0pt,-8pt);
    \draw (main.south) ++(6pt,0) -- ++(0pt,-8pt);
    \draw (main.north) ++(-6pt,0) -- ++(0,8pt);
    \draw (main.north) ++(6pt,0) -- ++(0,8pt);
  }\,
  }
  \;\oplus\;
  \underset{ \tikz \draw (0,0) rectangle (4pt,4pt) (4pt,0pt) rectangle (8pt,4pt) (0pt,-4pt)  rectangle  (4pt,0pt) ;  \,\otimes\, \tikz \draw (0,0) rectangle (4pt,4pt);}{
  \,\tikz[baseline=(main.base)]{
    \node[draw,rectangle,inner sep=2pt] (main) {$
      \,\tikz[baseline=(basepoint)]{ 
    \path (0,3pt) coordinate (basepoint) (8pt,1pt) node[dot] {} (0,11pt) node[dot] {};
    \draw[](-12pt,-9pt) -- (0,11pt);
    \draw[](0pt,-9pt) -- (8pt,1pt);
    \draw[](12pt,-9pt) -- (8pt,1pt);
    \draw[](8pt,1pt) -- (0,11pt);
    \draw[](0,11pt) -- (0,21pt);
  }\,
  +
  \,\tikz[baseline=(basepoint)]{ 
    \path (0,3pt) coordinate (basepoint) (-8pt,1pt) node[dot] {} (0,11pt) node[dot] {};
    \draw[](12pt,-9pt) -- (0,11pt);
    \draw[](-12pt,-9pt) -- (-8pt,1pt);
    \draw[](0pt,-9pt) -- (-8pt,1pt);
    \draw[](-8pt,1pt) -- (0,11pt);
    \draw[](0,11pt) -- (0,21pt);
  }\,
    $};
    \draw (main.north) -- ++(0,8pt);
    \draw (main.south) ++(-12pt,0) -- ++(0,-8pt);
    \draw (main.south) ++(0pt,0) -- ++(0,-8pt);
    \draw (main.south) ++(12pt,0) -- ++(0,-8pt);
  }\,
  }
$$
In the middle summand, $\mathbf 2 = \tikz \draw (0,0) rectangle (4pt,4pt) (4pt,0pt) rectangle (8pt,4pt); \oplus \tikz \draw (0,0) rectangle (4pt,4pt) (0pt,4pt) rectangle (4pt,8pt);$ denotes the two-dimensional permutation representation of $\SS_{2}$.  We have adopted the following diagrammatic convention: the diagram inside each rectangle is an element of $(\Frob_{1})^{\shriek} \subseteq \Ff(T[1])$, and the boxed diagram is the corresponding element of $(\Frob_{1})^{\shriek}[-1] \subseteq \B((\Frob_{1})^{\shriek}) \subseteq \B(\Ff(T[1]))$.  Later we will stack diagrams; this denotes composition in $\B((\Frob_{1})^{\shriek}) \subseteq \B(\Ff(T[1]))$.

When $\#\mult + \#\comult \geq 3$, the dioperadic and properadic versions of $(\Frob_{1})^{\shriek}[-1]$ diverge.  The dioperadic version appears as the genus $\beta = 0$ direct summand of the properadic version.  
%A linear combination of graphs with $\#\mult + \#\comult = 3$ is in $(\Frob_{1})^{\shriek}[-1]$ if and only if its derivative in $\B(\Ff(T[1]))$ lands in $\B((\Frob_{1})^{\shriek})$. 
 Of the  $\#\mult + \#\comult = 3$ part of $(\Frob_{1})^{\shriek}[-1]$, we will compute just the summand with genus $\beta = 1$.  Any composition of $\mult$ and $\comult$ with genus $\beta \geq 1$ has $\#\mult \geq 1$ and $\#\comult \geq 1$.  Since $
  \,\tikz[baseline=(basepoint)]{
    \path (0,0pt) coordinate (basepoint) (0,1pt) node[dot] {} (0,7pt) node[dot] {};
    \draw (0,-4pt) -- (0,0pt); \draw (0,8pt) -- (0,12pt);
    \draw (0,1pt) .. controls +(4pt,1pt) and +(4pt,-1pt) .. (0,7pt);
    \draw (0,1pt) .. controls +(-4pt,1pt) and +(-4pt,-1pt) .. (0,7pt);
  }\,=0$, the only graphs with $\beta = 1$ and $\bigl(\#\mult,\#\comult\bigr) = (1,2)$ are 
\begin{tikzpicture}[baseline=(basepoint)]
    \path (0,5pt) coordinate (basepoint) (0pt,2pt) node[dot] {} (8pt,12pt) node[dot] {} (-4pt,17pt) node[dot] {};
    \draw (0pt,-6pt) -- (0pt,2pt);
    \draw[] (0pt,0pt) -- (8pt,12pt);
    \draw[] (8pt,12pt) -- (-4pt,17pt);
    \draw[] (0pt,0pt) .. controls +(-6pt,6pt) and +(-6pt,-6pt) .. (-4pt,17pt);
    \draw (8pt,12pt) -- (12pt,24pt);
    \draw (-4pt,17pt) -- (-4pt,24pt);
  \end{tikzpicture}
and
\begin{tikzpicture}[baseline=(basepoint)]
    \path (0,5pt) coordinate (basepoint) (0pt,2pt) node[dot] {} (-8pt,12pt) node[dot] {} (4pt,17pt) node[dot] {};
    \draw (0pt,-6pt) -- (0pt,2pt);
    \draw[] (0pt,0pt) -- (-8pt,12pt);
    \draw[] (-8pt,12pt) -- (4pt,17pt);
    \draw[] (0pt,0pt) .. controls +(6pt,6pt) and +(6pt,-6pt) .. (4pt,17pt);
    \draw (-8pt,12pt) -- (-12pt,24pt);
    \draw (4pt,17pt) -- (4pt,24pt);
  \end{tikzpicture}.  We claim that each of these is in $(\Frob_{1})^{\shriek}[-1]$.  Indeed, one may compute that, in $\B(\Ff(T[1]))$, %\vspace*{-12pt}
  \begin{equation}\label{eqn1}
  \partial\left(
  \,\tikz[baseline=(main.base)]{
    \node[draw,rectangle,inner sep=2pt] (main) {\begin{tikzpicture}[baseline=(basepoint)]
    \path (0,5pt) coordinate (basepoint) (0pt,1pt) node[dot] {} (8pt,12pt) node[dot] {} (-4pt,17pt) node[dot] {};
    \draw (0pt,-6pt) -- (0pt,2pt);
    \draw[] (0pt,0pt) -- (8pt,12pt);
    \draw[] (8pt,12pt) -- (-4pt,17pt);
    \draw[] (0pt,0pt) .. controls +(-6pt,6pt) and +(-6pt,-6pt) .. (-4pt,17pt);
    \draw (8pt,12pt) -- (12pt,24pt);
    \draw (-4pt,17pt) -- (-4pt,24pt);
  \end{tikzpicture}\,};
    \draw (main.south) -- ++(0,-8pt);
    \draw (main.north) ++(-6pt,0) -- ++(0,8pt);
    \draw (main.north) ++(6pt,0) -- ++(0,8pt);
  }\,
  \right) = 
  \;\begin{tikzpicture}[baseline=(outerbasepoint)]
  \node[draw,inner sep=2pt] (box) {
    $
    \,\tikz[baseline=(basepoint)]{ 
      \path (0,3pt) coordinate (basepoint) (0,1pt) node[dot] {} (-8pt,11pt) node[dot] {};
      \draw[](0,-9pt) -- (0,1pt);
      \draw[](0,1pt) -- (-8pt,11pt);
      \draw[](-8pt,11pt) -- (-12pt,21pt);
      \draw[](-8pt,11pt) -- (0pt,21pt);
      \draw[](0,1pt) -- (12pt,21pt);
    }\,
    -
    \,\tikz[baseline=(basepoint)]{ 
      \path (0,3pt) coordinate (basepoint) (0,1pt) node[dot] {} (8pt,11pt) node[dot] {};
      \draw[](0,-9pt) -- (0,1pt);
      \draw[](0,1pt) -- (8pt,11pt);
      \draw[](0,1pt) -- (-12pt,21pt);
      \draw[](8pt,11pt) -- (0pt,21pt);
      \draw[](8pt,11pt) -- (12pt,21pt);
    }\,
    $};
  \draw (box.south) -- ++(0,-10pt) coordinate (bottom);
  \draw (box.north) -- ++(0pt,10pt);
  \draw (box.north) ++(-12pt,0) -- ++(0pt,10pt);
  \draw (box.north) ++(-6pt,18pt) node[draw,inner sep=2pt,fill=white] (upperbox) {\mult};
  \draw (upperbox.north) -- ++(0,10pt) ++(18pt,0) coordinate (top);
  \draw (box.north) ++(12pt,0) -- (top);
  \path (bottom) ++(0,40pt) coordinate (outerbasepoint);  
  \end{tikzpicture}\;
  \;+\;
  \;\begin{tikzpicture}[baseline=(outerbasepoint)]
  \node[draw,inner sep=2pt] (box) {
    $
    \,\tikz[baseline=(basepoint)]{ 
      \path (0,3pt) coordinate (basepoint) (0,1pt) node[dot] {} (0,11pt) node[dot] {};
      \draw[](-8pt,-9pt) -- (0,1pt);
      \draw[](8pt,-9pt) -- (0,1pt);
      \draw[](0,1pt) -- (0,11pt);
      \draw[](0,11pt) -- (-8pt,21pt);
      \draw[](0,11pt) -- (8pt,21pt);
    }\,
    -
    \,\tikz[baseline=(basepoint)]{ 
      \path (0,3pt) coordinate (basepoint) (8pt,1pt) node[dot] {} (-8pt,11pt) node[dot] {};
      \draw[](-8pt,-9pt) -- (-8pt,11pt);
      \draw[](8pt,-9pt) -- (8pt,1pt);
      \draw[](8pt,1pt) -- (-8pt,11pt);
      \draw[](-8pt,11pt) -- (-8pt,21pt);
      \draw[](8pt,1pt) -- (8pt,21pt);
    }\,
    $};
  \draw (box.north) ++(6pt,0) -- ++(0,10pt) coordinate(top);
  \draw (box.north) ++(-6pt,0) -- ++(0,10pt);
  \draw (box.south) ++(6pt,0) -- ++(0,-10pt);
  \draw (box.south) ++(-6pt,0) -- ++(0,-10pt);
  \draw (box.south) ++(0,-18pt) node[draw,inner sep=2pt,fill=white] (lowerbox) {\comult};
  \draw (lowerbox.south) -- ++(0,-10pt) coordinate (bottom);
  \path (bottom) ++(0,40pt) coordinate (outerbasepoint);  
  \end{tikzpicture}\;
  \;\in\; \sh\Frob_{1} = \B\bigl((\Frob_{1})^{\shriek}\bigr),
  \vspace*{-8pt}
  \end{equation}
%where $T$ is the $\SS$-bimodule spanned by $\bigl\{ \mult,\comult\bigr\}$, 
and therefore $\begin{tikzpicture}[baseline=(basepoint)]
    \path (0,5pt) coordinate (basepoint) (0pt,2pt) node[dot] {} (8pt,12pt) node[dot] {} (-4pt,17pt) node[dot] {};
    \draw (0pt,-6pt) -- (0pt,2pt);
    \draw[] (0pt,0pt) -- (8pt,12pt);
    \draw[] (8pt,12pt) -- (-4pt,17pt);
    \draw[] (0pt,0pt) .. controls +(-6pt,6pt) and +(-6pt,-6pt) .. (-4pt,17pt);
    \draw (8pt,12pt) -- (12pt,24pt);
    \draw (-4pt,17pt) -- (-4pt,24pt);
  \end{tikzpicture} \in (\Frob_{1})^{\shriek}[-1]$ by Lemma~\ref{lemma.cobarinductive}.
It follows that the $\#\mult + \#\comult = 3$ part of the properadic $(\Frob_{1})^{\shriek}[-1]$ is:
$$ \bigl(\beta = 0 \text{ part}\bigr) \;\oplus\;
\underset{\tikz \draw (0,0) rectangle (4pt,4pt); \,\otimes\, \mathbf 2}{
  \,\tikz[baseline=(main.base)]{
    \node[draw,rectangle,inner sep=2pt] (main) {\begin{tikzpicture}[baseline=(basepoint)]
    \path (0,5pt) coordinate (basepoint) (0pt,1pt) node[dot] {} (8pt,12pt) node[dot] {} (-4pt,17pt) node[dot] {};
    \draw (0pt,-6pt) -- (0pt,2pt);
    \draw[] (0pt,0pt) -- (8pt,12pt);
    \draw[] (8pt,12pt) -- (-4pt,17pt);
    \draw[] (0pt,0pt) .. controls +(-6pt,6pt) and +(-6pt,-6pt) .. (-4pt,17pt);
    \draw (8pt,12pt) -- (12pt,24pt);
    \draw (-4pt,17pt) -- (-4pt,24pt);
  \end{tikzpicture}\,};
    \draw (main.south) -- ++(0,-8pt);
    \draw (main.north) ++(-6pt,0) -- ++(0,8pt);
    \draw (main.north) ++(6pt,0) -- ++(0,8pt);
  }\,
}
\;\oplus\;
\underset{\mathbf 2 \,\otimes\,\tikz \draw (0,0) rectangle (4pt,4pt);}{
  \,\tikz[baseline=(main.base)]{
    \node[draw,rectangle,inner sep=2pt] (main) {%
    \begin{tikzpicture}[baseline=(basepoint)]
    \path (0,-13pt) coordinate (basepoint) (0pt,-1pt) node[dot] {} (8pt,-12pt) node[dot] {} (-4pt,-17pt) node[dot] {};
    \draw (0pt,6pt) -- (0pt,-2pt);
    \draw[] (0pt,0pt) -- (8pt,-12pt);
    \draw[] (8pt,-12pt) -- (-4pt,-17pt);
    \draw[] (0pt,0pt) .. controls +(-6pt,-6pt) and +(-6pt,6pt) .. (-4pt,-17pt);
    \draw (8pt,-12pt) -- (12pt,-24pt);
    \draw (-4pt,-17pt) -- (-4pt,-24pt);
  \end{tikzpicture}\,%
  };
    \draw (main.north) -- ++(0,8pt);
    \draw (main.south) ++(-6pt,0) -- ++(0,-8pt);
    \draw (main.south) ++(6pt,0) -- ++(0,-8pt);
  }\,
  }
$$

Finally, we will need later to know the piece of $(\Frob_{1})^{\shriek}[-1]$ with $\#\mult + \#\comult = 4$ and genus $\beta = 2$.  Since $\#\mult = \beta + m-1 \geq \beta$ and $\#\comult = \beta + n-1 \geq \beta$, the only way to have a composition of $\mult$ and $\comult$ with $\bigl(\#\mult + \#\comult,\beta\bigr) = (4,2)$ is if $\bigl(\#\mult,\#\comult\bigr) = (2,2)$ and $(m,n) = (1,1)$.  Recalling that $
  \,\tikz[baseline=(basepoint)]{
    \path (0,0pt) coordinate (basepoint) (0,1pt) node[dot] {} (0,7pt) node[dot] {};
    \draw (0,-4pt) -- (0,0pt); \draw (0,8pt) -- (0,12pt);
    \draw (0,1pt) .. controls +(4pt,1pt) and +(4pt,-1pt) .. (0,7pt);
    \draw (0,1pt) .. controls +(-4pt,1pt) and +(-4pt,-1pt) .. (0,7pt);
  }\,=0$, the only graph with  $(m,n,\beta) = (1,1,2)$ is 
  \begin{tikzpicture}[baseline=(basepoint)]
    \path (0,12pt) coordinate (basepoint) (0pt,5pt) node[dot] {} (8pt,12pt) node[dot] {} (-4pt,18pt) node[dot] {} (4pt,25pt) node[dot]{};
    \draw (0pt,-2pt) -- (0pt,5pt);
    \draw[] (0pt,5pt) -- (8pt,12pt);
    \draw[] (8pt,12pt) -- (-4pt,18pt);
    \draw[] (0pt,5pt) .. controls +(-6pt,6pt) and +(-6pt,-6pt) .. (-4pt,18pt);
    \draw[] (-4pt,18pt) -- (4pt,25pt);
    \draw[] (8pt,12pt) .. controls +(6pt,6pt) and +(6pt,-6pt) .. (4pt,25pt);
    \draw (4pt,25pt) -- (4pt,32pt);
  \end{tikzpicture}%
%  \begin{tikzpicture}[baseline=(basepoint)]
%    \path (0,12pt) coordinate (basepoint) (0pt,3pt) node[dot] {} (8pt,11pt) node[dot] {} (-4pt,19pt) node[dot] {} (4pt,27pt) node[dot]{};
%    \draw (0pt,-5pt) -- (0pt,3pt);
%    \draw[] (0pt,3pt) -- (8pt,11pt);
%    \draw[] (8pt,11pt) -- (-4pt,19pt);
%    \draw[] (0pt,3pt) .. controls +(-6pt,6pt) and +(-6pt,-6pt) .. (-4pt,19pt);
%    \draw[] (-4pt,19pt) -- (4pt,27pt);
%    \draw[] (8pt,11pt) .. controls +(6pt,6pt) and +(6pt,-6pt) .. (4pt,27pt);
%    \draw (4pt,27pt) -- (4pt,35pt);
%  \end{tikzpicture}
  .
    In particular, the (anti)symmetry of the $\mult$ and $\comult$ implies that this graph is equal up to sign to any of its permutations. 
    (Given these signs, it is worth checking that $\begin{tikzpicture}[baseline=(basepoint)]
    \path (0,12pt) coordinate (basepoint) (0pt,5pt) node[dot] {} (8pt,12pt) node[dot] {} (-4pt,18pt) node[dot] {} (4pt,25pt) node[dot]{};
    \draw (0pt,-2pt) -- (0pt,5pt);
    \draw[] (0pt,5pt) -- (8pt,12pt);
    \draw[] (8pt,12pt) -- (-4pt,18pt);
    \draw[] (0pt,5pt) .. controls +(-6pt,6pt) and +(-6pt,-6pt) .. (-4pt,18pt);
    \draw[] (-4pt,18pt) -- (4pt,25pt);
    \draw[] (8pt,12pt) .. controls +(6pt,6pt) and +(6pt,-6pt) .. (4pt,25pt);
    \draw (4pt,25pt) -- (4pt,32pt);
  \end{tikzpicture} \neq 0$ in $\Ff(T[1])$.  The $(\SS_{1}^{\op}\times\SS_{3})$-module $\Ff^{(2)}(\comult)$ splits as $\tikz \draw (0,0) rectangle (4pt,4pt);\otimes\bigl(\tikz \draw (0,0) rectangle (4pt,4pt) (4pt,0pt) rectangle (8pt,4pt) (0pt,-4pt)  rectangle  (4pt,0pt) ; \oplus \tikz \draw (0,0) rectangle (4pt,4pt) (4pt,0pt) rectangle (8pt,4pt) (8pt,0pt)  rectangle  (12pt,4pt) ;\bigr)$, whereas $\Ff^{(2)}(\mult) \cong \bigl(\tikz \draw (0,0) rectangle (4pt,4pt) (4pt,0pt) rectangle (8pt,4pt) (0pt,-4pt)  rectangle  (4pt,0pt) ; \oplus \tikz \draw (0,0) rectangle (4pt,4pt) (0pt,4pt) rectangle (4pt,8pt) (0pt,-4pt)  rectangle  (4pt,0pt) ;\bigr)\otimes \tikz \draw (0,0) rectangle (4pt,4pt);$.  The $(\SS_{1}^{\op}\times \SS_{1})$-module $\Ff^{(2)}(\mult) \otimes_{\SS_{3}} \Ff^{2}(\comult)$ is thus $1$-dimensional, and $\begin{tikzpicture}[baseline=(basepoint)]
    \path (0,12pt) coordinate (basepoint) (0pt,5pt) node[dot] {} (8pt,12pt) node[dot] {} (-4pt,18pt) node[dot] {} (4pt,25pt) node[dot]{};
    \draw (0pt,-2pt) -- (0pt,5pt);
    \draw[] (0pt,5pt) -- (8pt,12pt);
    \draw[] (8pt,12pt) -- (-4pt,18pt);
    \draw[] (0pt,5pt) .. controls +(-6pt,6pt) and +(-6pt,-6pt) .. (-4pt,18pt);
    \draw[] (-4pt,18pt) -- (4pt,25pt);
    \draw[] (8pt,12pt) .. controls +(6pt,6pt) and +(6pt,-6pt) .. (4pt,25pt);
    \draw (4pt,25pt) -- (4pt,32pt);
  \end{tikzpicture}$ is its generator.)
    
   Since \begin{tikzpicture}[baseline=(basepoint)]
    \path (0,12pt) coordinate (basepoint) (0pt,5pt) node[dot] {} (8pt,12pt) node[dot] {} (-4pt,18pt) node[dot] {} (4pt,25pt) node[dot]{};
    \draw (0pt,-2pt) -- (0pt,5pt);
    \draw[] (0pt,5pt) -- (8pt,12pt);
    \draw[] (8pt,12pt) -- (-4pt,18pt);
    \draw[] (0pt,5pt) .. controls +(-6pt,6pt) and +(-6pt,-6pt) .. (-4pt,18pt);
    \draw[] (-4pt,18pt) -- (4pt,25pt);
    \draw[] (8pt,12pt) .. controls +(6pt,6pt) and +(6pt,-6pt) .. (4pt,25pt);
    \draw (4pt,25pt) -- (4pt,32pt);
  \end{tikzpicture} spans a homogeneous piece of $\Ff(T[1])[-1]$, it either is or is not in $(\Frob_{1})^{\shriek}[-1]$, depending on whether its derivative in $\B(\Ff(T[1]))$ is in $\sh\Frob_{1} = \B((\Frob_{1})^{\shriek})$.  In fact:
  \begin{equation} \label{eqn2}
  \partial\left(
  \,\tikz[baseline=(main.base)]{
    \node[draw,rectangle,inner sep=2pt] (main) {
      \begin{tikzpicture}[baseline=(basepoint)]
    \path (0,12pt) coordinate (basepoint) (0pt,3pt) node[dot] {} (8pt,11pt) node[dot] {} (-4pt,19pt) node[dot] {} (4pt,27pt) node[dot]{};
    \draw (0pt,-5pt) -- (0pt,3pt);
    \draw[] (0pt,3pt) -- (8pt,11pt);
    \draw[] (8pt,11pt) -- (-4pt,19pt);
    \draw[] (0pt,3pt) .. controls +(-6pt,6pt) and +(-6pt,-6pt) .. (-4pt,19pt);
    \draw[] (-4pt,19pt) -- (4pt,27pt);
    \draw[] (8pt,11pt) .. controls +(6pt,6pt) and +(6pt,-6pt) .. (4pt,27pt);
    \draw (4pt,27pt) -- (4pt,35pt);
  \end{tikzpicture}
    };
    \draw (main.south) -- ++(0,-8pt);
    \draw (main.north) -- ++(0,8pt);
  }\,  
  \right) = 
  \,\tikz[baseline=(outerbasepoint)]{
    \node[draw,rectangle,inner sep=2pt] (main) {\begin{tikzpicture}[baseline=(basepoint)]
    \path (0,5pt) coordinate (basepoint) (0pt,1pt) node[dot] {} (8pt,12pt) node[dot] {} (-4pt,17pt) node[dot] {};
    \draw (0pt,-6pt) -- (0pt,2pt);
    \draw[] (0pt,0pt) -- (8pt,12pt);
    \draw[] (8pt,12pt) -- (-4pt,17pt);
    \draw[] (0pt,0pt) .. controls +(-6pt,6pt) and +(-6pt,-6pt) .. (-4pt,17pt);
    \draw (8pt,12pt) -- (12pt,24pt);
    \draw (-4pt,17pt) -- (-4pt,24pt);
  \end{tikzpicture}\,};
    \draw (main.south) -- ++(0,-8pt) coordinate(bottom);
    \draw (main.north) ++(-6pt,0) -- ++(0,10pt);
    \draw (main.north) ++(6pt,0) -- ++(0,10pt);
    \draw (main.north) ++(0,18pt) node[draw,inner sep=2pt,fill=white] (upperbox) {\mult};
    \draw (upperbox.north) -- ++(0,10pt) coordinate (top);
    \path (bottom) -- coordinate (outerbasepoint) (top);
  }\,
  \;+\;
  \,\tikz[baseline=(outerbasepoint)]{
    \node[draw,rectangle,inner sep=2pt] (main) {%
    \begin{tikzpicture}[baseline=(basepoint)]
    \path (0,-13pt) coordinate (basepoint) (0pt,-1pt) node[dot] {} (8pt,-12pt) node[dot] {} (-4pt,-17pt) node[dot] {};
    \draw (0pt,6pt) -- (0pt,-2pt);
    \draw[] (0pt,0pt) -- (8pt,-12pt);
    \draw[] (8pt,-12pt) -- (-4pt,-17pt);
    \draw[] (0pt,0pt) .. controls +(-6pt,-6pt) and +(-6pt,6pt) .. (-4pt,-17pt);
    \draw (8pt,-12pt) -- (12pt,-24pt);
    \draw (-4pt,-17pt) -- (-4pt,-24pt);
  \end{tikzpicture}\,%
  };
    \draw (main.north) -- ++(0,8pt) coordinate (top);
    \draw (main.south) ++(-6pt,0) -- ++(0,-10pt);
    \draw (main.south) ++(6pt,0) -- ++(0,-10pt);
    \draw (main.south) ++(0,-18pt) node[draw,inner sep=2pt,fill=white] (lowerbox) {\comult};
    \draw (lowerbox.south) -- ++(0,-10pt) coordinate (bottom);
    \path (bottom) -- coordinate (outerbasepoint) (top);
  }\,
  \;+\; \frac13\;\,
  \begin{tikzpicture}[baseline=(outerbasepoint)]
    \path coordinate (outerbasepoint) ++(0,0)
      +(0,-25pt) node[draw,inner sep = 2pt] (lowerbox) {
        $
        \,\tikz[baseline=(basepoint)]{ 
          \path (0,3pt) coordinate (basepoint) (0,1pt) node[dot] {} (-8pt,11pt) node[dot] {};
          \draw[](0,-9pt) -- (0,1pt);
          \draw[](0,1pt) -- (-8pt,11pt);
          \draw[](-8pt,11pt) -- (-12pt,21pt);
          \draw[](-8pt,11pt) -- (0pt,21pt);
          \draw[](0,1pt) -- (12pt,21pt);
        }\,
        -
        \,\tikz[baseline=(basepoint)]{ 
          \path (0,3pt) coordinate (basepoint) (0,1pt) node[dot] {} (8pt,11pt) node[dot] {};
          \draw[](0,-9pt) -- (0,1pt);
          \draw[](0,1pt) -- (8pt,11pt);
          \draw[](0,1pt) -- (-12pt,21pt);
          \draw[](8pt,11pt) -- (0pt,21pt);
          \draw[](8pt,11pt) -- (12pt,21pt);
        }\,
        $}
      +(0,25pt) node[draw,inner sep = 2pt] (upperbox) {
        $
        \,\tikz[baseline=(basepoint)]{ 
          \path (0,3pt) coordinate (basepoint) (8pt,1pt) node[dot] {} (0,11pt) node[dot] {};
          \draw[](-12pt,-9pt) -- (0,11pt);
          \draw[](0pt,-9pt) -- (8pt,1pt);
          \draw[](12pt,-9pt) -- (8pt,1pt);
          \draw[](8pt,1pt) -- (0,11pt);
          \draw[](0,11pt) -- (0,21pt);
        }\,
        +
        \,\tikz[baseline=(basepoint)]{ 
          \path (0,3pt) coordinate (basepoint) (8pt,1pt) node[dot] {} (0,11pt) node[dot] {};
          \draw[](0pt,-9pt) -- (0,11pt);
          \draw[](12pt,-9pt) -- (8pt,1pt);
          \draw[](-12pt,-9pt) -- (8pt,1pt);
          \draw[](8pt,1pt) -- (0,11pt);
          \draw[](0,11pt) -- (0,21pt);
        }\,        
        $
      };
    \draw (lowerbox.south) -- ++(0,-10pt);
    \draw (upperbox.north) -- ++(0,10pt);
    \draw (lowerbox.north) -- (upperbox.south);
    \path (lowerbox.north) +(12pt,0) coordinate (la) +(-12pt,0) coordinate (lb) (upperbox.south)  +(12pt,0) coordinate (ua) +(-12pt,0) coordinate (ub);
    \draw (la) -- (ua); \draw (lb) -- (ub);
  \end{tikzpicture}
  \;\in\; \sh\Frob_{1}.
  \end{equation}
Lemma~\ref{lemma.cobarinductive} implies therefore that in weight $\#\mult+ \#\comult = 4$, $(\Frob_{1})^{\shriek}[-1]$ looks like:
$$ (\beta = 0 \text{ part}) \;\oplus\; (\beta = 1 \text{ part}) \;\oplus \;
\underset{\tikz \draw (0,0) rectangle (4pt,4pt);\,\otimes\,\tikz \draw (0,0) rectangle (4pt,4pt);}{
  \,\tikz[baseline=(main.base)]{
    \node[draw,rectangle,inner sep=2pt] (main) {
      \begin{tikzpicture}[baseline=(basepoint)]
    \path (0,12pt) coordinate (basepoint) (0pt,5pt) node[dot] {} (8pt,12pt) node[dot] {} (-4pt,18pt) node[dot] {} (4pt,25pt) node[dot]{};
    \draw (0pt,-2pt) -- (0pt,5pt);
    \draw[] (0pt,5pt) -- (8pt,12pt);
    \draw[] (8pt,12pt) -- (-4pt,18pt);
    \draw[] (0pt,5pt) .. controls +(-6pt,6pt) and +(-6pt,-6pt) .. (-4pt,18pt);
    \draw[] (-4pt,18pt) -- (4pt,25pt);
    \draw[] (8pt,12pt) .. controls +(6pt,6pt) and +(6pt,-6pt) .. (4pt,25pt);
    \draw (4pt,25pt) -- (4pt,32pt);
  \end{tikzpicture}
    };
    \draw (main.south) -- ++(0,-8pt);
    \draw (main.north) -- ++(0,8pt);
  }\,  
%  \,\tikz[baseline=(main.base)]{
%    \node[draw,rectangle,inner sep=2pt] (main) {
%      \begin{tikzpicture}[baseline=(basepoint)]
%    \path (0,12pt) coordinate (basepoint) (0pt,3pt) node[dot] {} (8pt,11pt) node[dot] {} (-4pt,19pt) node[dot] {} (4pt,27pt) node[dot]{};
%    \draw (0pt,-5pt) -- (0pt,3pt);
%    \draw[] (0pt,3pt) -- (8pt,11pt);
%    \draw[] (8pt,11pt) -- (-4pt,19pt);
%    \draw[] (0pt,3pt) .. controls +(-6pt,6pt) and +(-6pt,-6pt) .. (-4pt,19pt);
%    \draw[] (-4pt,19pt) -- (4pt,27pt);
%    \draw[] (8pt,11pt) .. controls +(6pt,6pt) and +(6pt,-6pt) .. (4pt,27pt);
%    \draw (4pt,27pt) -- (4pt,35pt);
%  \end{tikzpicture}
%    };
%    \draw (main.south) -- ++(0,-8pt);
%    \draw (main.north) -- ++(0,8pt);
%  }\,  
}$$

\section{The (non)existence of homomorphisms \texorpdfstring{$\sh\Frob_{1} \to \qloc^{\inv}$}{shFrob_1 to QLoc^inv}} \label{section.mainthms}

Let $\sh^{\di}\Frob_{1}$ denote the cofibrant replacement \define{as a dioperad} of $\Frob_{1}$ coming from Corollary~\ref{cor.shFrob}, and let $\sh^{\pr}\Frob_{1}$ denote the cofibrant replacement \define{as a properad}.  There is a canonical inclusion $\sh^{\di}\Frob_{1} \mono \sh^{\pr}\Frob_{1}$, given by including the generators of $\sh^{\di}\Frob_{1}$ as the genus $\beta = 0$ generators of $\sh^{\pr}\Frob_{1}$.  In this final section, we will prove that the space of quasilocal translation-invariant actions of $\sh^{\di}\Frob_{1}$ on $\C_{\bullet}(\RR)$ that induce the standard multiplication on $\H^{1-\bullet}(\RR)$ and the standard comultiplication on $\H_{\bullet}(\RR)$ is contractible, but that such a dioperadic action does not lift to a properad homomorphism $\sh^{\pr}\Frob_{1}\to \qloc$.

\begin{theorem} \label{thm.dioperadic}
  Recall from Lemma~\ref{lemma.cochains} that we have two embeddings $\qloc \mono \End(\C_{\bullet}(\RR))$ and $\qloc \mono \End(\C^{1-\bullet}(\RR))$, and so any $\partial$-closed element of $\qloc$ defines actions on both homology and cohomology.  
  Consider the space of maps $\eta: \sh^{\di}\Frob_{1} \to \qloc^{\inv}$ for which $\H_{\bullet}\bigl(\eta\bigl( \comult\bigr)\bigr) : \H_{\bullet}(\RR) \to \H_{\bullet}(\RR)^{\otimes 2}$ is the standard comultiplication and $\H^{1-\bullet}\bigl( \eta\bigl( \mult\bigr)\bigr) : \H^{1-\bullet}(\RR)^{\otimes 2} \to \H^{1-\bullet}(\RR)$ is the standard multiplication (by working with shifted cohomology $\H^{1-\bullet}$, the degree-$0$ multiplication on $\H^{\bullet}$ shifts to having homological degree $-1 = \deg\bigl(\mult\bigr)$).  This space is contractible, and remains contractible if $\qloc^{\inv}$ is replaced by $\qloc$.
\end{theorem}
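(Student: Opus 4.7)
The plan is to construct $\eta$ by induction on the weight $k = \#\mult + \#\comult$ of the generators of $\sh^{\di}\Frob_1$, using obstruction theory. By Corollary~\ref{cor.shFrob}, a generator in weight $k$ with $m$ inputs and $n$ outputs satisfies $m+n = k+2$ and lies in homological degree $n-2$; the cobar differential $\partial \tilde x$ is a sum of binary compositions of generators of strictly smaller weight, landing in homological degree $n-3$.

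For the base case $k=1$, the two generators $\comult$ and $\mult$ must be sent to translation-invariant Thom forms (Definition~\ref{defn.Thom}) representing the prescribed comultiplication and multiplication classes. Such Thom forms exist by Proposition~\ref{homologyofqloc} together with Lemma~\ref{lemma.hqlocfrob}, and in each arity the subspace of them is an affine space over the contractible space of $\partial$-boundaries.

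For the inductive step, suppose $\eta$ has been defined on all weights $<k$. For each weight-$k$ generator $\tilde x$ of arity $(m,n)$, the cycle $\eta(\partial \tilde x) \in \qloc^{\inv}(m,n)_{n-3}$ represents the obstruction class in $\H_{n-3}(\qloc^{\inv}(m,n))$, which by Proposition~\ref{homologyofqloc} is nonzero only when $m+n \in \{3,4\}$. The case $m+n = 3$ is weight $1$ and is already handled; for $k \geq 3$ we have $m+n \geq 5$ and the obstruction vanishes automatically. The only nontrivial case is $k=2$, where $n-3 = -m+1$ is the top homological degree and the inclusion $\qloc^{\inv} \mono \qloc$ is an isomorphism on $\H_{-m+1}$ (both groups being one-dimensional, generated by translation-invariant Thom forms). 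The weight-$2$ generators of $(\Frob_1)^{\shriek}[-1]$ are copies of the defining relations $R$ of $\Frob_1$ up to shift, so by Lemma~\ref{lemma.hqlocfrob} the class of $\eta(\partial \tilde x)$ in $\H(\qloc) = \Frob_1$ is exactly the corresponding relation $r \in R$ evaluated in $\Frob_1$, which is zero on the nose. Hence the obstruction vanishes.

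Contractibility then follows by the standard obstruction-theoretic argument: once obstructions vanish at every stage, the dg-enriched mapping space of dioperads is built as a tower of fibrations whose fibers are Dold--Kan-contractible affine simplicial abelian groups of chains in $\qloc^{\inv}$. The same argument applies verbatim with $\qloc^{\inv}$ replaced by $\qloc$, and is if anything easier since $\H(\qloc)$ is smaller. I expect the main technical subtlety to be the weight-$2$ case: one must verify carefully that the cobar differential of a weight-$2$ generator really evaluates, under the identification of $\H(\qloc)$ with $\Frob_1$ via chosen Thom forms, to the corresponding defining relation in $\Frob_1$ with the correct signs and symmetrizations.
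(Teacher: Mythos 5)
Your proposal is correct and follows essentially the same route as the paper: induction over the weight $\#\mult+\#\comult$, the degree count from Proposition~\ref{homologyofqloc} and Corollary~\ref{cor.shFrob} showing obstructions can only live in weights $1$ and $2$, Thom forms pinned down by the prescribed (co)multiplications in weight $1$, and Lemma~\ref{lemma.hqlocfrob} identifying the weight-$2$ obstruction with a relation of $\Frob_{1}$ (equivalently, a difference of Thom forms in $\RR^{4}$), which vanishes in homology. The contractibility conclusion via the vanishing of $\H_{\deg(f)+j}$ for $j\geq -1$ at each stage is likewise the paper's argument.
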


By definition, if $P$ and $Q$ are (di/pr)operads, the \define{space of maps $P \to Q$} is the simplicial set whose $k$-simplices are homomorphisms $P \to Q \otimes \QQ[\Delta^{k}]$, where $\QQ[\Delta^{k}] = \QQ[t_{0},\dots,t_{k},\partial t_{0},\dots,\partial t_{k}] / \bigl\<\sum t_{i} = 1,\, \sum \partial t_{i} = 0\bigr\>$ is the commutative dg algebra of polynomial de Rham forms on the $k$-dimensional simplex.  If $P$ is cofibrant, then this simplicial set satisfies the Kan horn-filling condition.  By convention, a space is \define{contractible} if it has the homotopy type of $\{*\}$, which in particular includes that it is nonempty.

To prove Theorem~\ref{thm.dioperadic}, we will use the following reasonably well known \define{basic facts of obstruction theory}.  Let $P$ be a quasifree (di/pr)operad with a well-ordered set of generators $f$ (each of which is  homogeneous  of homological degree $\deg(f)$), and such that for each generator $f$ of $P$, $\partial(f)$ is a composition of generators of $P$ that are strictly earlier than $f$ in the well-ordering.  For each generator $f$, let $P_{< f}$ denote the sub(di/pr)operad of $P$ generated by the generators that are strictly earlier than $f$, and $P_{\leq f}$ the sub(di/pr)operad generated $P_{<f}$ and $f$; the condition then guarantees that $P_{<f}$ and $P_{\leq f}$ are dg (di/pr)operads and that $\partial(f) \in P_{<f}$.  The basic facts that we will use provide ways to study, for any (di/pr)operad $Q$, the space of homomorphisms $\eta : P \to Q$.

\begin{enumerate}
  \item Homomorphisms $\eta: P \to Q$ may be built and studied inductively.  Suppose we have defined a homomorphism $\eta_{<f} : P_{<f} \to Q$, which we want to extend to $P_{\leq f}$.  Then $\eta(\partial f)\in Q$ is closed of homological degree $\deg(f)-1$.  The \define{obstruction to defining $f$} is the class of $\eta(\partial f)$ in $\H_{\bullet}(Q)$.  The first basic fact of obstruction theory is that $\eta(f)$ can be defined, and therefore the induction can be continued, if and only if its obstruction vanishes.  In particular, maps $P \to Q$ are easy to construct whenever the homology groups of $Q$ vanish in degrees $\deg(f)-1$ for generators $f$ of $P$.
  \item There are, of course, many choices for $\eta(f)$ --- as many as there are closed elements of $Q$ (with the appropriate number of inputs and outputs) of degree $\deg (f)$.  Different choices might lead to later steps of the induction succeeding or failing.  The second basic fact says that whether a later step succeeds is not affected by changing $\eta(f)$ by something exact, so that the ``true'' space of choices for $\eta(f)$ (provided the obstruction is exact) is a torsor for $\H_{\deg(f)}(Q)$.
  
  To prove this, note first that two different extensions of $\eta_{<f} : P_{<f}\to Q$ whose values on $f$ differ by something exact can be connected by a ``linear'' path $P_{\geq f} \to Q\otimes \QQ[\Delta^{1}] = Q\otimes\QQ[t]$ (and conversely one may check that the endpoints of any path $P_{\leq f} \to Q\otimes \QQ[\Delta^{1}]$ which is constant on $P_{<f}$ assigns to $f$ values that differ by something exact).  Suppose then by induction that we have a curve $\eta_{<f'} : P_{< f'} \to Q\otimes \QQ[\Delta^{1}]$, and that we choose an extension of its left endpoint to $\eta_{\leq f'}|_{t=0} : P_{\leq f'} \to Q$.  It suffices to define $\eta_{\leq f'} : P_{\leq f'} \to Q\otimes \QQ[\Delta^{1}]$ extending $\eta_{<f'}$ with the given boundary condition.  
  To find such an extension $\eta_{\leq f'}$, one may  use~\cite[Proposition~37]{MR2572248}, which implies that the inclusion $P_{<f'}\mono P_{\leq f'}$ is a cofibration; the projection $Q\otimes \QQ[\Delta^{1}] \to Q$ that sets $t = 0$ is a surjective quasiisomorphism, i.e.\ an acyclic fibration; the extension then exists by the left lifting property.
  %Alternately, one may find $\eta_{\leq f'}$ directly as follows.  The only undetermined data of $\eta_{\leq f'}$ is its value $\eta(f') \in Q\otimes \QQ[\Delta^{1}]$, which we can write in coordinates as $\alpha(t) + \beta(t)\partial t$, where $t = t_{1}$ and $\alpha(t),\beta(t)$ are polynomials in $t$ valued in $Q$ of the aproprate homological degrees.  The boundary condition says that $\alpha(0)$ is the given $\eta_{\leq f'}|_{t=0}(f')$, and the request that $\eta_{\leq f'}$ be a homomorphism unpacks to a polynomial ordinary differential equation for $(\alpha,\beta)$.  One may then write this differential equation in integral form.  The conditions on $P$ imply that the corresponding integral operator is a contraction mapping, from which the existence of solutions to the initial value problem follows.
  
  \item Finally, a similar analysis shows that, more generally, $\pi_{j}\{$space of choices for $\eta_{\leq f}$ extending $\eta_{<f}\} = \H_{\deg(f)+j}(Q)$, provided $\pi_{i} = 0$ for $i<j$.  Here the correct conventions are: $\pi_{-1}(X) = 0 = \{*\} =$ ``true'' for $X$ nonempty, and $\pi_{-1}(\emptyset) = \emptyset = $ ``false''; and that $\pi_{j}(X) = 0$ for all $X$ and all $j<-1$.  Thus in particular asking that $\pi_{-1}\{\text{choices}\}=0$ is the same as asking that the obstruction is exact, so that a choice exists.

 In particular, we may immediately conclude that the space of choices for $\eta(f)$ is contractible whenever $\H_{\deg(f) + j}(Q) = 0$ for all $j \geq -1$.
\end{enumerate}

\begin{proof}[Proof of Theorem~\ref{thm.dioperadic}]
  Consider first the generators $\mult$ and $\comult$ of $\sh^{\di}\Frob_{1}$.  In order for them to induce the standard (co)multiplications on (co)homology, they must each be assigned to  Thom forms around $\diag(\RR) \subseteq \RR^{3}$.  The space of Thom forms is contractible, and remains so upon imposing translation invariance.
  
  For the remaining generators, the homotopy type of the space of choices is contractible, as we now show.  We computed in Proposition~\ref{homologyofqloc} that $\qloc^{\inv}(m,n)$ has homology only in degrees $-m$ and $-m+1$, and that $\qloc(m,n)$ has homology only in degree $-m+1$.  Let $f$ be a generator of $\sh^{\di}\Frob_{1}$; we may suppose without loss of generality that it is homogeneous for the bigrading $\bigl(\#\mult,\#\comult\bigr)$.  Then $f$ is in homological degree $\deg(f) = \#\comult - 1$ by Corollary~\ref{cor.shFrob}.  Thus the obstruction might be nonzero only when $\# \comult - 2 = -m$ or $-m+1$.  Recalling that $m = \#\mult + 1$, we see that
   {the obstruction automatically vanishes unless }$\#\mult + \#\comult = 1 \text{ or }2. $
  
  We have addressed already the generators for which $\#\mult + \#\comult = 1$.  For the generators with $\#\mult + \#\comult = 2$, Lemma~\ref{lemma.hqlocfrob} implies that the obstruction is the difference between two translation-invariant Thom forms around $\diag(\RR) \subseteq \RR^{4}$, and hence vanishes in homology.
\end{proof}

Theorem~\ref{thm.dioperadic} gives an affirmative answer to the question of geometrically lifting the \emph{dioperadic} action of $\Frob_{1}$ on $\H_{\bullet}(\RR)$ to the chain level.  The main result of this paper is that the similar question for \emph{properads} has a negative answer.

\begin{theorem} \label{thm.properadic}
  There does not exist a properad homomorphism $\eta: \sh^{\pr}\Frob_{1} \to \qloc$ such that $\H_{\bullet}\bigl(\eta\bigl( \comult\bigr)\bigr)$ and $\H^{1-\bullet}\bigl(\eta(\bigl(\mult\bigr))$ are the standard comultiplication and multiplication.
\end{theorem}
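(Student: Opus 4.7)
Suppose for contradiction that such an $\eta$ exists. I would run the obstruction-theoretic framework from the proof of Theorem~\ref{thm.dioperadic} inductively over the generators of $(\Frob_{1})^{\shriek}[-1]$. The standard-(co)multiplication hypotheses determine $\eta(\mult)$ and $\eta(\comult)$ uniquely up to $\partial$-exact, as Thom forms in $\qloc(2,1)_{-1}$ and $\qloc(1,2)_{0}$. The weight-$2$ dioperadic generators and the new genus-$1$ properadic generators at $(m,n)=(1,2),(2,1)$ can then all be assigned: their obstruction classes vanish by arguments parallel to the weight-$2$ analysis in Theorem~\ref{thm.dioperadic} (in each case the potentially nonzero class in $H_{\bullet}(\qloc(m,n))$ is a controllable difference of Thom-type cycles, and the one degree of freedom can be used to kill it). The first new place this machinery fails is at the genus-$2$ generator $g$ at $(m,n)=(1,1)$ whose boundary is computed in equation~\eqref{eqn2}.

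Apply $\eta$ to \eqref{eqn2}: one gets $\eta(\partial g)=T_{1}+T_{2}+T_{3}\in \qloc(1,1)_{0}$, which must be $\partial$-exact if $\eta(g)$ is to exist. By Proposition~\ref{homologyofqloc}, $H_{0}(\qloc(1,1))\cong \QQ$, and this class is detected by the induced endomorphism of $H_{0}(\RR)$ (or equivalently, via Lemma~\ref{lemma.cochains}, of $H^{0}(\RR)$): a closed degree-$0$ quasilocal operator $\C_{\bullet}(\RR)\to\C_{\bullet}(\RR)$ acts as a scalar on $H_{0}(\RR)\cong\QQ$, and that scalar is precisely its homology class. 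The plan is to evaluate each $T_{i}$ on the fundamental class and show that the total is nonzero.

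The tree-like terms $T_{1}$ and $T_{2}$ vanish on $H_{0}(\RR)$ by a degree count: each factors through the genus-$1$ chain homotopy at $(1,2)$ or $(2,1)$, which is a degree-$+1$ operator, sending $H_{0}(\RR)$ into $H_{1}(\C_{\bullet}(\RR)^{\otimes 2})=0$. The entire obstruction therefore lives in the ``trace'' term $T_{3}=\tfrac{1}{3}\,h_{\mathrm{ass}}\circ_{3\text{-edges}}h_{\mathrm{coass}}$, where $h_{\mathrm{coass}}=\eta(\text{coassoc-generator})\in\qloc(1,3)_{1}$ and $h_{\mathrm{ass}}=\eta(\text{assoc-generator})\in\qloc(3,1)_{-1}$. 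Although $h_{\mathrm{coass}}$ is not a chain map, for a cycle $c$ the element $h_{\mathrm{coass}}(c)$ is closed (coassociativity holds on $H_{\bullet}$, so $\partial h_{\mathrm{coass}}(c)=[\mathrm{coass}](c)=0$) but represents the zero class in $H_{1}(\C_{\bullet}(\RR)^{\otimes 3})=0$; hence $h_{\mathrm{coass}}(c)=\partial u$ for some $u\in\C_{\bullet}(\RR)^{\otimes 3}_{2}$. Using $\partial h_{\mathrm{ass}}=[\mathrm{ass}]$ one finds modulo $\partial$-exact
\[
T_{3}(c)\;\equiv\;-\tfrac{1}{3}\,[\mathrm{ass}](u)\pmod{\partial}.
\]
The class of $[\mathrm{ass}](u)$ in $H_{0}(\RR)$ is exactly the Frobenius ``dimension/trace'' invariant $\mu\cdot\mu\cdot\Delta\cdot\Delta(1)$ of the one-dimensional commutative Frobenius algebra structure on $H^{\bullet}(\RR)\cong\QQ$ induced by the hypotheses on $\eta(\mult),\eta(\comult)$. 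For a one-dimensional (nondegenerately paired) Frobenius algebra this trace is nonzero, so $[\eta(\partial g)]\neq 0$ in $H_{0}(\qloc(1,1))$, contradicting the existence of $\eta(g)$.

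The main obstacle is the bookkeeping that turns the informal ``$T_{3}\equiv-\tfrac{1}{3}[\mathrm{ass}](u)$'' into an honest nonzero scalar: one must track the signs from the antisymmetric $\mult$ and the Koszul shifts, verify that the $\tfrac{1}{3}$ coefficient from \eqref{eqn2} survives, and check that the resulting number is invariant under the allowed homotopies at earlier inductive stages (so that no choice of $\eta$ on the $R$- or genus-$1$ generators can be used to cancel it). Once those conventions are pinned down, the computation reduces to recognizing a recognizable nonzero invariant of the one-dimensional Frobenius algebra $H^{\bullet}(\RR)$.
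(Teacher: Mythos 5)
Your overall framework is the same as the paper's: run obstruction theory over the generators, observe that everything through the genus-$1$ generators can be chosen, and locate the fatal obstruction at the genus-$2$ generator with $(m,n)=(1,1)$, detected by the scalar through which the closed degree-$0$ operator $\eta(\partial g)$ acts on $\H_{0}(\RR)$. But the computation of that scalar --- the heart of the proof --- is wrong in two related ways. First, your claim that the two tree-like terms $T_{1},T_{2}$ of equation~\eqref{eqn2} ``vanish on $\H_{0}(\RR)$ by a degree count'' is invalid: the genus-$1$ homotopies are not chain maps (and the one at $(2,1)$ has degree $0$, not $+1$), so they do not induce maps on homology, and the individual summands $T_{i}$ are not closed --- only their sum is. One cannot evaluate a non-closed operator ``on $\H_{0}$.'' In fact the paper's explicit computation shows the opposite distribution: with suitable choices the third summand $T_{3}$ vanishes identically (the coassociativity homotopy is supported on $1$-chains while the associativity homotopy lands in $0$-chains, forcing the degree-$0$ composite to be zero), and the entire obstruction $-\tfrac{1}{12}\cdot\id$ comes from $T_{1}+T_{2}$.

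Second, the mechanism you propose for non-vanishing cannot work. You identify the obstruction with the ``trace'' $\mu\mu\Delta\Delta$ of a one-dimensional Frobenius algebra on $\H^{\bullet}(\RR)$, but there is no single one-dimensional space carrying both structure maps nontrivially: the multiplication has homological degree $-1$ and the comultiplication degree $0$, so on $\H_{\bullet}(\RR)\cong\QQ$ (concentrated in one degree) the multiplication is zero, and on $\H^{1-\bullet}(\RR)\cong\QQ[1]$ the comultiplication is zero. This degree count is exactly why the properad $\Frob_{1}$ has no positive-genus operations (as noted after Definition~\ref{defn.frob1}), so every homology-level genus invariant of this kind vanishes. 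The obstruction is a genuinely secondary, chain-level quantity: it depends on the chosen null-homotopies, and one must actually compute with explicit Thom forms and homotopies (as the paper does in the cellular model, producing the value $-\tfrac{1}{12}$) to see that it is nonzero. Your proposal defers precisely this computation to ``bookkeeping,'' but no amount of bookkeeping will make the homology-level Frobenius trace nonzero; a different, explicit argument is required.
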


In light of Lemma~\ref{lemma.hqlocfrob}, Theorems~\ref{thm.dioperadic} and~\ref{thm.properadic} may be rephrased as saying that $\qloc$ is \define{formal}, i.e.\ quasiisomorphic to its homology, as a dioperad but not as a properad.

\begin{proof}
  Let $f$ be a generator of $\sh^{\pr}\Frob_{1}$ with $m$ inputs and $n$ outputs which is homogeneous for the bigrading $\bigl(\#\mult,\#\comult\bigr)$; 
  as in Corollary~\ref{cor.shFrob}, we
   let $\beta = \#\mult -m + 1 = \#\comult - n + 1$ denote its genus.  Restricting $\eta$ just to the genus $\beta = 0$ generators determines a map $\sh^{\di}\Frob_{1}\to \qloc$, and by Theorem~\ref{thm.dioperadic} the space of such maps is contractible.  It then follows from the basic facts of obstruction theory that the homotopy type of the space of extensions to a map $\sh^{\pr}\Frob_{1}\to \qloc$ is independent of the choices made for the map $\sh^{\di}\Frob_{1}\to \qloc$.
  
  Simple combinatorics implies moreover that the obstruction automatically vanishes unless $m+n + \beta = 4$, and that the space of choices is contractible if nonempty.  Thus the only  generators of $\sh^{\pr}\Frob_{1}$ beyond those of $\sh^{\di}\Frob_{1}$ for which the obstruction might not vanish are permutations of $  \,\tikz[baseline=(main.base)]{
    \node[draw,rectangle,inner sep=2pt] (main) {\begin{tikzpicture}[baseline=(basepoint)]
    \path (0,5pt) coordinate (basepoint) (0pt,1pt) node[dot] {} (8pt,12pt) node[dot] {} (-4pt,17pt) node[dot] {};
    \draw (0pt,-6pt) -- (0pt,2pt);
    \draw[] (0pt,0pt) -- (8pt,12pt);
    \draw[] (8pt,12pt) -- (-4pt,17pt);
    \draw[] (0pt,0pt) .. controls +(-6pt,6pt) and +(-6pt,-6pt) .. (-4pt,17pt);
    \draw (8pt,12pt) -- (12pt,24pt);
    \draw (-4pt,17pt) -- (-4pt,24pt);
  \end{tikzpicture}\,};
    \draw (main.south) -- ++(0,-8pt);
    \draw (main.north) ++(-6pt,0) -- ++(0,8pt);
    \draw (main.north) ++(6pt,0) -- ++(0,8pt);
  }\,$, $  \,\tikz[baseline=(main.base)]{
    \node[draw,rectangle,inner sep=2pt] (main) {%
    \begin{tikzpicture}[baseline=(basepoint)]
    \path (0,-13pt) coordinate (basepoint) (0pt,-1pt) node[dot] {} (8pt,-12pt) node[dot] {} (-4pt,-17pt) node[dot] {};
    \draw (0pt,6pt) -- (0pt,-2pt);
    \draw[] (0pt,0pt) -- (8pt,-12pt);
    \draw[] (8pt,-12pt) -- (-4pt,-17pt);
    \draw[] (0pt,0pt) .. controls +(-6pt,-6pt) and +(-6pt,6pt) .. (-4pt,-17pt);
    \draw (8pt,-12pt) -- (12pt,-24pt);
    \draw (-4pt,-17pt) -- (-4pt,-24pt);
  \end{tikzpicture}\,%
  };
    \draw (main.north) -- ++(0,8pt);
    \draw (main.south) ++(-6pt,0) -- ++(0,-8pt);
    \draw (main.south) ++(6pt,0) -- ++(0,-8pt);
  }\,$, and $  \,\tikz[baseline=(main.base)]{
    \node[draw,rectangle,inner sep=2pt] (main) {
      \begin{tikzpicture}[baseline=(basepoint)]
    \path (0,12pt) coordinate (basepoint) (0pt,5pt) node[dot] {} (8pt,12pt) node[dot] {} (-4pt,18pt) node[dot] {} (4pt,25pt) node[dot]{};
    \draw (0pt,-2pt) -- (0pt,5pt);
    \draw[] (0pt,5pt) -- (8pt,12pt);
    \draw[] (8pt,12pt) -- (-4pt,18pt);
    \draw[] (0pt,5pt) .. controls +(-6pt,6pt) and +(-6pt,-6pt) .. (-4pt,18pt);
    \draw[] (-4pt,18pt) -- (4pt,25pt);
    \draw[] (8pt,12pt) .. controls +(6pt,6pt) and +(6pt,-6pt) .. (4pt,25pt);
    \draw (4pt,25pt) -- (4pt,32pt);
  \end{tikzpicture}
    };
    \draw (main.south) -- ++(0,-8pt);
    \draw (main.north) -- ++(0,8pt);
  }\,  
$.  We will calculate their obstructions.

We will focus on the cellular model of chains $\C_{\bullet}(\RR)$.  The result for the de Rham models follows by replacing the choices below by smooth approximations thereof.

  For the cellular model, there is a $0$-cell $c_{z}$ for each $z\in \ZZ$, and a $1$-cell $c_{z+\frac12}$ for each $z\in \ZZ$, and the boundary operator is $\partial c_{z+\frac12} = c_{z+1} - c_{z}$.  We might as well choose the following Thom forms:
\begin{gather*}
  \,\tikz[baseline=(main.base)]{
    \node[draw,rectangle,inner sep=2pt] (main) {\comult};
    \draw (main.south) -- ++(0,-8pt);
    \draw (main.north) ++(-6pt,0) -- ++(0,8pt);
    \draw (main.north) ++(6pt,0) -- ++(0,8pt);
  }\,
  :  \quad
  c_{z} \mapsto c_{z}\otimes c_{z}, \quad c_{z+\frac12} \mapsto \frac12 \left( (c_{z}+ c_{z+1}) \otimes c_{z+\frac12} + c_{z+\frac12}\otimes (c_{z} + c_{z+1}) \right), 
  \\
  \,\tikz[baseline=(main.base)]{
    \node[draw,rectangle,inner sep=2pt] (main) {\mult};
    \draw (main.north) -- ++(0,8pt);
    \draw (main.south) ++(-6pt,0) -- ++(0,-8pt);
    \draw (main.south) ++(6pt,0) -- ++(0,-8pt);
  }\,
  :\quad
  c_{z+\frac12}\otimes c_{z+\frac12} \mapsto c_{z+\frac12}, \quad c_{z}\otimes c_{z\pm \frac12}\mapsto -\frac12 c_{z}, \quad c_{z\pm\frac12}\otimes c_{z} \mapsto \frac12 c_{z}.
\end{gather*}
In both lines $z\in \ZZ$, and in the second line all non-listed pairs $c_{x}\otimes c_{y}$ get mapped to $0$.  %These choices are related by the interchange of inputs and outputs.

With these choices, $\partial   
  \,\tikz[baseline=(main.base)]{
    \node[draw,rectangle,inner sep=2pt] (main) {$
        \,\tikz[baseline=(basepoint)]{ 
    \path (0,3pt) coordinate (basepoint) (0,1pt) node[dot] {} (-8pt,11pt) node[dot] {};
    \draw[](0,-9pt) -- (0,1pt);
    \draw[](0,1pt) -- (-8pt,11pt);
    \draw[](-8pt,11pt) -- (-12pt,21pt);
    \draw[](-8pt,11pt) -- (0pt,21pt);
    \draw[](0,1pt) -- (12pt,21pt);
  }\,
  -
  \,\tikz[baseline=(basepoint)]{ 
    \path (0,3pt) coordinate (basepoint) (0,1pt) node[dot] {} (8pt,11pt) node[dot] {};
    \draw[](0,-9pt) -- (0,1pt);
    \draw[](0,1pt) -- (8pt,11pt);
    \draw[](0,1pt) -- (-12pt,21pt);
    \draw[](8pt,11pt) -- (0pt,21pt);
    \draw[](8pt,11pt) -- (12pt,21pt);
  }\,
    $};
    \draw (main.south) -- ++(0,-8pt);
    \draw (main.north) ++(-12pt,0) -- ++(0,8pt);
    \draw (main.north) ++(0pt,0) -- ++(0,8pt);
    \draw (main.north) ++(12pt,0) -- ++(0,8pt);
  }\,$
  sends $c_{z}\mapsto 0$ when $z\in \ZZ$, and sends: \vspace*{-4pt}
  $$ c_{z+\frac12}\mapsto \frac14 \left( (c_{z+1}-c_{z}) \otimes (c_{z+1}-c_{z}) \otimes c_{z+\frac12} - c_{z+\frac12} \otimes (c_{z+1}-c_{z}) \otimes (c_{z+1}-c_{z}) \right).\vspace*{-4pt}$$
  Any assignment for the generator \,\tikz[baseline=(main.base)]{
    \node[draw,rectangle,inner sep=2pt] (main) {$
        \,\tikz[baseline=(basepoint)]{ 
    \path (0,3pt) coordinate (basepoint) (0,1pt) node[dot] {} (-8pt,11pt) node[dot] {};
    \draw[](0,-9pt) -- (0,1pt);
    \draw[](0,1pt) -- (-8pt,11pt);
    \draw[](-8pt,11pt) -- (-12pt,21pt);
    \draw[](-8pt,11pt) -- (0pt,21pt);
    \draw[](0,1pt) -- (12pt,21pt);
  }\,
  -
  \,\tikz[baseline=(basepoint)]{ 
    \path (0,3pt) coordinate (basepoint) (0,1pt) node[dot] {} (8pt,11pt) node[dot] {};
    \draw[](0,-9pt) -- (0,1pt);
    \draw[](0,1pt) -- (8pt,11pt);
    \draw[](0,1pt) -- (-12pt,21pt);
    \draw[](8pt,11pt) -- (0pt,21pt);
    \draw[](8pt,11pt) -- (12pt,21pt);
  }\,
    $};
    \draw (main.south) -- ++(0,-8pt);
    \draw (main.north) ++(-12pt,0) -- ++(0,8pt);
    \draw (main.north) ++(0pt,0) -- ++(0,8pt);
    \draw (main.north) ++(12pt,0) -- ++(0,8pt);
  }\, itself must have this as its derivative, and must transform in the two-dimensional irrep \tikz \draw (0,0) rectangle (4pt,4pt) (4pt,0pt) rectangle (8pt,4pt) (0pt,-4pt)  rectangle  (4pt,0pt) ; of $\SS_{3}$.  One choice that works is to send $c_{z}\mapsto 0$ for $z\in \ZZ$ and:
  $$ 
  c_{z+\frac12} \mapsto \frac16 c_{z+\frac12} \otimes (c_{z+1}-c_{z}) \otimes c_{z+\frac12} + \frac1{12} \left(
   (c_{z+1}-c_{z}) \otimes c_{z+\frac12} \otimes c_{z+\frac12} + c_{z+\frac12} \otimes c_{z+\frac12} \otimes  (c_{z+1}-c_{z})
   \right)
  $$
  By our obstruction-theoretic analysis, the particular choice won't affect whether later obstructions vanish.  \vspace*{-8pt}
  
  A similar computation implies that the generator 
  \,\tikz[baseline=(main.base)]{
    \node[draw,rectangle,inner sep=2pt] (main) {$
          \,\tikz[baseline=(basepoint)]{ 
    \path (0,3pt) coordinate (basepoint) (0,1pt) node[dot] {} (0,11pt) node[dot] {};
    \draw[](-8pt,-9pt) -- (0,1pt);
    \draw[](8pt,-9pt) -- (0,1pt);
    \draw[](0,1pt) -- (0,11pt);
    \draw[](0,11pt) -- (-8pt,21pt);
    \draw[](0,11pt) -- (8pt,21pt);
  }\,
  -
  \,\tikz[baseline=(basepoint)]{ 
    \path (0,3pt) coordinate (basepoint) (8pt,1pt) node[dot] {} (-8pt,11pt) node[dot] {};
    \draw[](-8pt,-9pt) -- (-8pt,11pt);
    \draw[](8pt,-9pt) -- (8pt,1pt);
    \draw[](8pt,1pt) -- (-8pt,11pt);
    \draw[](-8pt,11pt) -- (-8pt,21pt);
    \draw[](8pt,1pt) -- (8pt,21pt);
  }\,
    $};
    \draw (main.south) ++(-6pt,0) -- ++(0pt,-8pt);
    \draw (main.south) ++(6pt,0) -- ++(0pt,-8pt);
    \draw (main.north) ++(-6pt,0) -- ++(0,8pt);
    \draw (main.north) ++(6pt,0) -- ++(0,8pt);
  }\,
  may be chosen to send
  $$   
  c_{z} \otimes c_{z\pm \frac12} \mapsto \pm \frac14 c_{z} \otimes c_{z\pm \frac12},\; \forall z\in \ZZ, \quad \text{ all other }c_{x}\otimes c_{y} \mapsto 0. $$
  
  We may now compute the action of $\partial \,\tikz[baseline=(main.base)]{
    \node[draw,rectangle,inner sep=2pt] (main) {\begin{tikzpicture}[baseline=(basepoint)]
    \path (0,5pt) coordinate (basepoint) (0pt,1pt) node[dot] {} (8pt,12pt) node[dot] {} (-4pt,17pt) node[dot] {};
    \draw (0pt,-6pt) -- (0pt,2pt);
    \draw[] (0pt,0pt) -- (8pt,12pt);
    \draw[] (8pt,12pt) -- (-4pt,17pt);
    \draw[] (0pt,0pt) .. controls +(-6pt,6pt) and +(-6pt,-6pt) .. (-4pt,17pt);
    \draw (8pt,12pt) -- (12pt,24pt);
    \draw (-4pt,17pt) -- (-4pt,24pt);
  \end{tikzpicture}\,};
    \draw (main.south) -- ++(0,-8pt);
    \draw (main.north) ++(-6pt,0) -- ++(0,8pt);
    \draw (main.north) ++(6pt,0) -- ++(0,8pt);
  }\,$, using equation~\eqref{eqn1}. \vspace*{-8pt}  With our choices, this derivative vanishes on $c_{z}$ for $z\in \ZZ$, and sends:
  $$ c_{z+\frac12}\mapsto -\frac1{12} (c_{z+1} - c_{z})\otimes c_{z+\frac12} + \frac1{12} c_{z+\frac12} \otimes (c_{z+1} - c_{z}).  $$
  Since the $\SS_{2}$ action is free, we may choose:
  $$ \,\tikz[baseline=(main.base)]{
    \node[draw,rectangle,inner sep=2pt] (main) {\begin{tikzpicture}[baseline=(basepoint)]
    \path (0,5pt) coordinate (basepoint) (0pt,1pt) node[dot] {} (8pt,12pt) node[dot] {} (-4pt,17pt) node[dot] {};
    \draw (0pt,-6pt) -- (0pt,2pt);
    \draw[] (0pt,0pt) -- (8pt,12pt);
    \draw[] (8pt,12pt) -- (-4pt,17pt);
    \draw[] (0pt,0pt) .. controls +(-6pt,6pt) and +(-6pt,-6pt) .. (-4pt,17pt);
    \draw (8pt,12pt) -- (12pt,24pt);
    \draw (-4pt,17pt) -- (-4pt,24pt);
  \end{tikzpicture}\,};
    \draw (main.south) -- ++(0,-8pt);
    \draw (main.north) ++(-6pt,0) -- ++(0,8pt);
    \draw (main.north) ++(6pt,0) -- ++(0,8pt);
  }\, : c_{z}\mapsto 0 \text{ and } c_{z+\frac12} \mapsto -\frac1{12} c_{z+\frac12} \otimes c_{z+\frac12},\quad \forall z\in \ZZ.\vspace*{-16pt}$$
  By making dual choices, we find in the same way a value for $  \,\tikz[baseline=(main.base)]{
    \node[draw,rectangle,inner sep=2pt] (main) {%
    \begin{tikzpicture}[baseline=(basepoint)]
    \path (0,-13pt) coordinate (basepoint) (0pt,-1pt) node[dot] {} (8pt,-12pt) node[dot] {} (-4pt,-17pt) node[dot] {};
    \draw (0pt,6pt) -- (0pt,-2pt);
    \draw[] (0pt,0pt) -- (8pt,-12pt);
    \draw[] (8pt,-12pt) -- (-4pt,-17pt);
    \draw[] (0pt,0pt) .. controls +(-6pt,-6pt) and +(-6pt,6pt) .. (-4pt,-17pt);
    \draw (8pt,-12pt) -- (12pt,-24pt);
    \draw (-4pt,-17pt) -- (-4pt,-24pt);
  \end{tikzpicture}\,%
  };
    \draw (main.north) -- ++(0,8pt);
    \draw (main.south) ++(-6pt,0) -- ++(0,-8pt);
    \draw (main.south) ++(6pt,0) -- ++(0,-8pt);
  }\,$.
  
  The last generator for which the obstruction might not vanish is 
  \,\tikz[baseline=(main.base)]{
    \node[draw,rectangle,inner sep=2pt] (main) {
      \begin{tikzpicture}[baseline=(basepoint)]
    \path (0,12pt) coordinate (basepoint) (0pt,5pt) node[dot] {} (8pt,12pt) node[dot] {} (-4pt,18pt) node[dot] {} (4pt,25pt) node[dot]{};
    \draw (0pt,-2pt) -- (0pt,5pt);
    \draw[] (0pt,5pt) -- (8pt,12pt);
    \draw[] (8pt,12pt) -- (-4pt,18pt);
    \draw[] (0pt,5pt) .. controls +(-6pt,6pt) and +(-6pt,-6pt) .. (-4pt,18pt);
    \draw[] (-4pt,18pt) -- (4pt,25pt);
    \draw[] (8pt,12pt) .. controls +(6pt,6pt) and +(6pt,-6pt) .. (4pt,25pt);
    \draw (4pt,25pt) -- (4pt,32pt);
  \end{tikzpicture}
    };
    \draw (main.south) -- ++(0,-8pt);
    \draw (main.north) -- ++(0,8pt);
  }\,%
%    \,\tikz[baseline=(main.base)]{
%    \node[draw,rectangle,inner sep=2pt] (main) {
%      \begin{tikzpicture}[baseline=(basepoint)]
%    \path (0,12pt) coordinate (basepoint) (0pt,3pt) node[dot] {} (8pt,11pt) node[dot] {} (-4pt,19pt) node[dot] {} (4pt,27pt) node[dot]{};
%    \draw (0pt,-5pt) -- (0pt,3pt);
%    \draw[] (0pt,3pt) -- (8pt,11pt);
%    \draw[] (8pt,11pt) -- (-4pt,19pt);
%    \draw[] (0pt,3pt) .. controls +(-6pt,6pt) and +(-6pt,-6pt) .. (-4pt,19pt);
%    \draw[] (-4pt,19pt) -- (4pt,27pt);
%    \draw[] (8pt,11pt) .. controls +(6pt,6pt) and +(6pt,-6pt) .. (4pt,27pt);
%    \draw (4pt,27pt) -- (4pt,35pt);
%  \end{tikzpicture}
%    };
%    \draw (main.south) -- ++(0,-8pt);
%    \draw (main.north) -- ++(0,8pt);
%  }\,
  .  Our computations above imply that in equation~\eqref{eqn2}, the first summand on the right hand side acts by $-\frac1{12}$ on $1$-chains and by $0$ on $0$-chains.  The second summand acts by $-\frac1{12}$ on $0$-chains and by $0$ on $1$-chains by a similar computation; indeed, we may make all choices symmetrically with respect to reversing inputs and outputs, and simultaneously reversing $0$-chains and $1$-chains.\vspace*{-20pt}  Finally, \,\tikz[baseline=(main.base)]{
    \node[draw,rectangle,inner sep=2pt] (main) {$
        \,\tikz[baseline=(basepoint)]{ 
    \path (0,3pt) coordinate (basepoint) (0,1pt) node[dot] {} (-8pt,11pt) node[dot] {};
    \draw[](0,-9pt) -- (0,1pt);
    \draw[](0,1pt) -- (-8pt,11pt);
    \draw[](-8pt,11pt) -- (-12pt,21pt);
    \draw[](-8pt,11pt) -- (0pt,21pt);
    \draw[](0,1pt) -- (12pt,21pt);
  }\,
  -
  \,\tikz[baseline=(basepoint)]{ 
    \path (0,3pt) coordinate (basepoint) (0,1pt) node[dot] {} (8pt,11pt) node[dot] {};
    \draw[](0,-9pt) -- (0,1pt);
    \draw[](0,1pt) -- (8pt,11pt);
    \draw[](0,1pt) -- (-12pt,21pt);
    \draw[](8pt,11pt) -- (0pt,21pt);
    \draw[](8pt,11pt) -- (12pt,21pt);
  }\,
    $};
    \draw (main.south) -- ++(0,-8pt);
    \draw (main.north) ++(-12pt,0) -- ++(0,8pt);
    \draw (main.north) ++(0pt,0) -- ++(0,8pt);
    \draw (main.north) ++(12pt,0) -- ++(0,8pt);
  }\, is non-zero only on $1$-chains, but the image of   \,\tikz[baseline=(main.base)]{
    \node[draw,rectangle,inner sep=2pt] (main) {$
      \,\tikz[baseline=(basepoint)]{ 
    \path (0,3pt) coordinate (basepoint) (8pt,1pt) node[dot] {} (0,11pt) node[dot] {};
    \draw[](-12pt,-9pt) -- (0,11pt);
    \draw[](0pt,-9pt) -- (8pt,1pt);
    \draw[](12pt,-9pt) -- (8pt,1pt);
    \draw[](8pt,1pt) -- (0,11pt);
    \draw[](0,11pt) -- (0,21pt);
  }\,
  +
  \,\tikz[baseline=(basepoint)]{ 
    \path (0,3pt) coordinate (basepoint) (-8pt,1pt) node[dot] {} (0,11pt) node[dot] {};
    \draw[](12pt,-9pt) -- (0,11pt);
    \draw[](-12pt,-9pt) -- (-8pt,1pt);
    \draw[](0pt,-9pt) -- (-8pt,1pt);
    \draw[](-8pt,1pt) -- (0,11pt);
    \draw[](0,11pt) -- (0,21pt);
  }\,
    $};
    \draw (main.north) -- ++(0,8pt);
    \draw (main.south) ++(-12pt,0) -- ++(0,-8pt);
    \draw (main.south) ++(0pt,0) -- ++(0,-8pt);
    \draw (main.south) ++(12pt,0) -- ++(0,-8pt);
  }\, consists only of $0$-chains, and so the last summand in equation~\eqref{eqn2} vanishes, as it describes a map of homological degree $0$.
  
  All together, we see that $\partial\left(
  \,\tikz[baseline=(main.base)]{
    \node[draw,rectangle,inner sep=2pt] (main) {
      \begin{tikzpicture}[baseline=(basepoint)]
    \path (0,12pt) coordinate (basepoint) (0pt,5pt) node[dot] {} (8pt,12pt) node[dot] {} (-4pt,18pt) node[dot] {} (4pt,25pt) node[dot]{};
    \draw (0pt,-2pt) -- (0pt,5pt);
    \draw[] (0pt,5pt) -- (8pt,12pt);
    \draw[] (8pt,12pt) -- (-4pt,18pt);
    \draw[] (0pt,5pt) .. controls +(-6pt,6pt) and +(-6pt,-6pt) .. (-4pt,18pt);
    \draw[] (-4pt,18pt) -- (4pt,25pt);
    \draw[] (8pt,12pt) .. controls +(6pt,6pt) and +(6pt,-6pt) .. (4pt,25pt);
    \draw (4pt,25pt) -- (4pt,32pt);
  \end{tikzpicture}
    };
    \draw (main.south) -- ++(0,-8pt);
    \draw (main.north) -- ++(0,8pt);
  }\,
%  \,\tikz[baseline=(main.base)]{
%    \node[draw,rectangle,inner sep=2pt] (main) {
%      \begin{tikzpicture}[baseline=(basepoint)]
%    \path (0,12pt) coordinate (basepoint) (0pt,3pt) node[dot] {} (8pt,11pt) node[dot] {} (-4pt,19pt) node[dot] {} (4pt,27pt) node[dot]{};
%    \draw (0pt,-5pt) -- (0pt,3pt);
%    \draw[] (0pt,3pt) -- (8pt,11pt);
%    \draw[] (8pt,11pt) -- (-4pt,19pt);
%    \draw[] (0pt,3pt) .. controls +(-6pt,6pt) and +(-6pt,-6pt) .. (-4pt,19pt);
%    \draw[] (-4pt,19pt) -- (4pt,27pt);
%    \draw[] (8pt,11pt) .. controls +(6pt,6pt) and +(6pt,-6pt) .. (4pt,27pt);
%    \draw (4pt,27pt) -- (4pt,35pt);
%  \end{tikzpicture}
%    };
%    \draw (main.south) -- ++(0,-8pt);
%    \draw (main.north) -- ++(0,8pt);
%  }\,
  \right)$
  acts on $\C_{\bullet}(\RR)$ by multiplication by $-\frac1{12}$.  The identity operator is exact only on complexes with trivial homology, and in particular multiplication by $-\frac1{12}$ is not exact in $\qloc$.  It therefore obstructs the existence of a homomorphism $\sh^{\pr}\Frob_{1} \to \qloc$ sending $\mult$ and $\comult$ to Thom forms.
\end{proof}

%In fact, our calculations obstruct quasilocal  $\sh^{\pr}\Frob_{1}$ actions even if translation-invariance is dropped (the homology of $\qloc(m,n)$ is one-dimensional concentrated in degree $-m+1$).  
As we remarked in the introduction, homotopy perturbation theory does construct a homotopy $\Frob_{1}$ structure on $\C_{\bullet}(\RR)$ inducing the structure on $\H_{\bullet}(\RR)$.  It may even be chosen quasilocally: it then satisfies $\H^{1-\bullet}(\mult) = 0$, not the standard multiplication.  Alternately, one can set $\mult$ to the intersection of chains to try to force $\H^{1-\bullet}(\mult)$ to be the standard multiplication; but then necessarily the generator 
$\,\tikz[baseline=(main.base)]{
    \node[draw,rectangle,inner sep=2pt] (main) {\begin{tikzpicture}[baseline=(basepoint)]
    \path (0,5pt) coordinate (basepoint) (0pt,1pt) node[dot] {} (8pt,12pt) node[dot] {} (-4pt,17pt) node[dot] {};
    \draw (0pt,-6pt) -- (0pt,2pt);
    \draw[] (0pt,0pt) -- (8pt,12pt);
    \draw[] (8pt,12pt) -- (-4pt,17pt);
    \draw[] (0pt,0pt) .. controls +(-6pt,6pt) and +(-6pt,-6pt) .. (-4pt,17pt);
    \draw (8pt,12pt) -- (12pt,24pt);
    \draw (-4pt,17pt) -- (-4pt,24pt);
  \end{tikzpicture}\,};
    \draw (main.south) -- ++(0,-8pt);
    \draw (main.north) ++(-6pt,0) -- ++(0,8pt);
    \draw (main.north) ++(6pt,0) -- ++(0,8pt);
  }\,$
  will act non-quasilocally.

%\bibliography{thesisreferences}{}
%\bibliographystyle{alpha}

\end{document}